\documentclass[12pt,a4paper,reqno]{amsart}
\usepackage{amsmath,amsfonts,amsthm,amssymb,color}
\usepackage{cmmib57}
\usepackage{exscale}
\usepackage{amscd}
\usepackage{latexsym}
\usepackage{graphicx}
\usepackage[T1]{fontenc}
\usepackage[latin1]{inputenc}
\usepackage{pdfsync}

%% %   Somewhat wider and taller page than in art12.sty
  \topmargin -0.4in  \headsep 0.4in  \textheight 9.3in
   \oddsidemargin 0.02in  \evensidemargin 0.15in
\textwidth 6.3in

\newtheorem{thm}{Theorem}[section]
\newtheorem{cor}[thm]{Corollary}
\newtheorem{lem}[thm]{Lemma}

\newtheorem{defn}{Definition}[section]

\theoremstyle{remark}

%\newcommand{\qed}{\quad{$\square$}}

%\addtolength{\hoffset}{-0.50cm} \addtolength{\voffset}{-1.25cm}
%\addtolength{\textwidth}{1.25cm} \addtolength{\textheight}{2.00cm}

\newcommand{\Cos}{\mathfrak{c}}
\newcommand{\Sin}{\mathfrak{s}}

%{\it Proof}: \hfill\qed

\begin{document}

\title[On a 2D SEE of transport type: existence and energy transfer]{
On a 2D stochastic Euler equation of transport type:\\ existence and geometric formulation}

\author{Ana Bela Cruzeiro \and Iv\'an Torrecilla}

\address{{\it Ana Bela Cruzeiro:} {\rm GFMUL and Dep. Matem\'atica IST, Av. Rovisco Pais, 1049-001 Lisboa, Portugal }{\it Email: }{\tt abcruz@math.ist.utl.pt}
\newline$\mbox{ }$\hspace{0.1cm}
 {\it Iv\'an Torrecilla:} {\rm GFMUL (Grupo de F\'isica-Matem\'atica da Univ. de Lisboa,
Av. Prof. Gama Pinto 2, 1649-003 Lisboa, Portugal } {\it Email: }{\tt itorrecillatarantino@gmail.com}.}

\begin{abstract}

We prove weak existence of Euler equation (or Navier-Stokes  equation) perturbed by a
multiplicative noise on bounded domains of $\mathbb R^2$ with Dirichlet boundary conditions
and with periodic boundary conditions.
Solutions are $H^1$ regular. The equations are of transport type. 
\end{abstract}

\keywords{}

%\subjclass[2000]{}

\date{\today}
\maketitle
\section{Introduction}
We consider a stochastic partial differential equation which can be regarded as a random perturbation of the Euler as well as the Navier-Stokes equation on a two-dimensional bounded domain where we consider Dirichlet boundary conditions (or  periodic boundary conditions).

The noise is chosen in a natural way: the equations model the transport of an initial velocity and an initial random dispersion along the Lagrangian flow. It is therefore a direct generalization of the deterministic transport equations.

In the second section we formulate the problem and state the weak
existence   of the stochastic p.d.e. in  the space $H^1$. We define in section 3 the finite-dimensional approximations of the solution and complete the proof in section 4.

This linear part of this equation is actually a stochastic parallel transport over Brownian paths and, as such,  can be characterized in terms of the
geometry defined by the $L^2$ metric in the space of measure-preserving diffeomorphisms of the underlying two-dimensional domain.

Section 5 is devoted to the periodic boundary conditions case. We explain the geometric formulation of our
equations in the last section.

\section{Euler equation perturbed by a multiplicative noise}
We consider the following stochastic Euler equation in dimension 2:
\begin{equation}
\label{Euler:str:eq}
\left\{\renewcommand{\arraystretch}{1.8}
\begin{array}{ll}
d u(t,\theta)=-(u(t,\theta) \cdot\nabla )u(t,\theta)\,dt-\nabla p (t, \theta )\,dt+ \sum_{l=1}^2\partial_l u(t,\theta) \circ dB^l(t) &\text{ in }]0,T[ \times \Theta ,    \\
{\rm div}\, u(t,\theta)=0 &\text{ in }]0,T[ \times \Theta , \\
u(t,\theta)=0 &\text{ on } ]0,T[ \times \Gamma,\\
u(0,\theta)=u_0(\theta) &\text{ in } \Theta,
\end{array}
\right.
\end{equation}
where $\nabla$ denotes the gradient, ${\rm div}\, u = \sum_{i=1}^2 \partial_i u^i$. We suppose that $\Theta$ is a bounded simply connected domain in $\mathbb{R}^2$, $\Gamma=\partial \Theta$ is sufficiently regular. The term $ \sum_{l=1}^2\partial_l u(t,\theta) \circ dB^l(t)$ is considered as a stochastic perturbation of the deterministic equation where $B=(B^1,B^2)$ is a $2$-dimensional Brownian motion in a probability space $(\Omega,\mathcal{F},\mathbb{P})$ and the differential is taken  in the Stratonovich sense. The term $p$ corresponds to the pressure. Since we are going to consider solutions of this equation in the weak sense it will not be mathematically relevant and we shall not write it,  starting from the next section.

Without noise the equation reduces to the usual deterministic Euler equation which is well known to 
describe the transport of an initial velocity along the corresponding Lagrangian flow (the transport being
considered with respect to the derivative, the canonical connection in a flat space). We are therefore
studying a transport type stochastic system, where not only an initial velocity but also a noise is transported along the 
underlying flow.

\smallskip

We use  the relation between Stratonovich and It\^o differentials, namely

$$\partial_l u(t,\theta) \circ dB^l (t) = \partial_l u(t,\theta)  dB^l (t) +\frac{1}{2} d(\partial_l u(t,\theta)).
dB^l (t)$$

Since the It\^o differential of the  martingale part of the process $\partial_l u(t,\theta)$ is given by
$\sum_{j=1}^2 \partial_j \partial_l u (t,\theta ) d B^j (t)$, we obtain

$$\partial_l u(t,\theta) \circ dB^l (t) = \partial_l u(t,\theta)  dB^l (t) +\frac{1}{2} \partial^2_l u(t,\theta)
dt$$ and the equivalent form of equation  Equation (\ref{Euler:str:eq}):

\begin{equation}
\label{Euler:ito:eq}
\left\{\renewcommand{\arraystretch}{1.8}
\begin{array}{ll}
d u(t,\theta)=\left\{\frac{1}{2}\,\Delta u(t,\theta)-(u(t,\theta) \cdot\nabla )u(t,\theta)\right\}\,dt
-\nabla p (t, \theta )\,dt\\\hspace{6cm}+ \sum_{l=1}^2\partial_l u(t,\theta)\,  dB^l(t) &\text{ in }]0,T[ \times \Theta ,    \\
{\rm div}\, u(t,\theta)=0 &\text{ in }]0,T[ \times \Theta , \\
u(t,\theta)=0 &\text{ on }]0,T[ \times \Gamma\\
u(0,\theta)=u_0(\theta) &\text{ in } \Theta,
\end{array}
\right.
\end{equation}
where $\Delta$ denotes the Laplacian. Observe that Equation (\ref{Euler:ito:eq}) is a stochastic Navier-Stokes equation in dimension 2. In addition, we can view $B$ as a cylindrical Wiener process in $\mathbb{R}^2$. In fact, we could think Equation (\ref{Euler:ito:eq}) as a particular case of Equation $(1.1)$ in \cite{Mi09} with
\begin{align*}
&\{a^{ij}\}_{1\leq i,j \leq 2}=\left(
            \begin{array}{cc}
              1/2 & 0 \\
              0 & 1/2 \\
            \end{array}
          \right),
\\
&\sigma^1=\left(
                          \begin{array}{cc}
                            1 & 0 \\
                          \end{array}
                        \right),\quad\sigma^2=\left(
                          \begin{array}{cc}
                            0 & 1 \\
                          \end{array}
                        \right)
\\
&p=\tilde p=f^j=g^j=0,\quad j=1,2.
\end{align*}
However, in our case, $a^{ij}-1/2 \sigma^i\cdot\sigma^j=0$, for all $1\leq i,j\leq 2$, that is, this matrix is not uniformly nondegenerated. Thus, we cannot apply directly Mikulevicius' results \cite{Mi09} (see also \cite{MiRo05}) on the existence and uniqueness of solution to Equation (\ref{Euler:ito:eq}).

The usual methods to prove existence and uniqueness of perturbed Navier-Stokes equations (c.f. for example
\cite{Sch91}) do not hold here since the noise we consider, although natural from a physical point of view, is not regular
enough nor bounded.

\smallskip

We introduce the basic spaces in this note:
\begin{align*}
\mathcal{V}&=\left\{v\in [\mathcal{C}_0^\infty(\Theta)]^2,\,{\rm div}\,v=0\right\}\\
H&=\text{the closure of $\mathcal{V}$ in $[L^2(\Theta)]^2$},\\
V&=\text{the closure of $\mathcal{V}$ in $[H^1_0(\Theta)]^2$}.
\end{align*}
The space $H$ is equipped with the scalar product $\langle \cdot,\cdot \rangle_0$ and associated norm $\|\cdot\|_0$ induced by $[L^2(\Theta)]^2$; the space $V$ is a Hilbert space with the scalar product
\begin{align*}
\langle u,v\rangle_1 =\sum_{i=1}^2 \langle \partial_i u,\partial_i v\rangle_0,
\end{align*}
and associated norm $\|\cdot\|_1$. Note that this norm is equivalent to the $[H^1(\Theta)]^2$-norm by  Poincar\'e's inequality.
\smallskip
The space $V$ is contained in $H$, is dense in $H$, and the injection is continuous. Let $H'$ and $V'$ denote the dual space of $H$ and $V$, respectively. We have the dense, continuous embedding
$$
V\hookrightarrow H = H'\hookrightarrow V'.
$$

The main result is the following existence result for the solution to Equation (\ref{Euler:ito:eq}):
\begin{thm}
\label{Main-result}
Let $u_0\in V$. Then there exist a probability space $(\Omega,\mathcal{F},\mathbb{P})$ with a right-continuous filtration $\mathbb{F}=\{\mathcal{F}_t\}$ of $\sigma$-algebras, a real 2-dimensional Brownian motion $B_t$, and an $[L^2(\Theta)]^2$-valued weakly continuous pathwise unique  $\mathbb{F}$-adapted process $u(t)$ such that
$$
\sup_{0\leq t \leq T}\hbox {ess sup}_{\Omega}\|u(t)\|_0^2=:\mathcal{K}<+\infty ,
$$

$$
\sup_{0\leq t\leq T}\mathbb{E}\| u(t)\|_1^2=:\mathcal{K}'<\infty
$$
and (\ref{Euler:ito:eq}) holds. In addition, $u(t)$ is strongly continuous in $t$.
\end{thm}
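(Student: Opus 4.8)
\emph{Overview and Galerkin scheme.} The plan is the classical combination of a Galerkin approximation with a compactness (martingale-solution) argument; the essential structural feature to be exploited is that the transport character of the equation makes the It\^o correction $\tfrac12\Delta u$ compensate, in every energy-type identity, the quadratic variation of the noise $\sum_l\partial_l u\,dB^l$ (the linear part of the equation being a parallel transport, hence an isometry on Sobolev spaces). Let $\{e_k\}_{k\ge1}$ be the orthonormal basis of $H$ made of the eigenfunctions of the Stokes operator $A=-P\Delta$, with $P$ the Leray projection, $Ae_k=\lambda_k e_k$, $0<\lambda_1\le\lambda_2\le\cdots$; these vectors are also orthogonal in $V$, so the $H$-orthogonal projection $P_n$ onto $H_n=\mathrm{span}\{e_1,\dots,e_n\}$ agrees on $V$ with the $V$-orthogonal one. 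One then solves the finite dimensional It\^o system
\begin{equation*}
du_n=\Bigl(-\tfrac12 Au_n-P_n\bigl((u_n\cdot\nabla)u_n\bigr)\Bigr)\,dt+\sum_{l=1}^2 P_n(\partial_l u_n)\,dB^l,\qquad u_n(0)=P_nu_0 ,
\end{equation*}
a locally Lipschitz SDE in $H_n\cong\mathbb R^n$ whose maximal solution is global by the a priori bounds below.

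\emph{A priori estimates (the core of the proof).} It\^o's formula for $\|u_n\|_0^2$, together with $\langle (u_n\cdot\nabla)u_n,u_n\rangle_0=0$, $\langle Au_n,u_n\rangle_0=\|u_n\|_1^2$ and the Dirichlet condition, which forces $\langle\partial_l u_n,u_n\rangle_0=\tfrac12\int_\Gamma|u_n|^2 n_l\,ds=0$ so that the stochastic part of $d\|u_n\|_0^2$ disappears, gives
\begin{equation*}
d\|u_n\|_0^2=\Bigl(-\|u_n\|_1^2+\sum_{l=1}^2\|P_n(\partial_l u_n)\|_0^2\Bigr)\,dt=-\sum_{l=1}^2\|(I-P_n)(\partial_l u_n)\|_0^2\,dt\le 0 ,
\end{equation*}
hence $\|u_n(t)\|_0^2\le\|u_0\|_0^2$ for all $t$, pathwise; as $n\to\infty$ the projection defect vanishes and the limit satisfies the exact energy equality $\|u(t)\|_0=\|u_0\|_0$, which is the source of the bound $\mathcal K$. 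Since the viscosity is entirely absorbed by the noise at the $L^2$ level, the $H^1$ bound has to be produced one derivative higher, and this is the delicate step: applying It\^o's formula to $\|u_n\|_1^2$—equivalently, estimating the enstrophy of $\mathrm{curl}\,u_n$, which on the simply connected $\Theta$ is comparable to $\|u_n\|_1$—the leading second-order terms largely compensate as before, the nonlinear term contributes nothing after integration by parts in dimension two (no vortex stretching), and the It\^o term has zero expectation; what has to be controlled is the boundary flux over $\Gamma$—present here, unlike in the periodic case, because the Dirichlet condition on $u_n$ imposes no boundary condition on $\mathrm{curl}\,u_n$—together with the commutators generated by the projection $P_n$. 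These are handled by trace, elliptic-regularity and interpolation inequalities and absorbed through Gronwall's lemma, yielding $\sup_{t\le T}\mathbb E\|u_n(t)\|_1^2\le\mathcal K'$ uniformly in $n$. I expect this $H^1$ estimate to be the main obstacle.

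\emph{Compactness and passage to the limit.} The two bounds, together with a uniform estimate of a small fractional-in-time Sobolev norm of $u_n$ read off from the equation, give tightness of the laws of $(u_n)$ on, say, $C([0,T];V')\cap L^2(0,T;H)\cap L^\infty_{w}(0,T;V)$. By Prokhorov's and Skorokhod's theorems one realises a subsequence on a new probability space as processes $\tilde u_n\to\tilde u$ almost surely in that topology, with accompanying Brownian motions $\tilde B_n\to\tilde B$, all uniform bounds being preserved. One then passes to the limit in the weak formulation of (\ref{Euler:ito:eq}): the linear terms by the weak-$\ast$ and strong convergences, the nonlinear term $\langle P_n((u_n\cdot\nabla)u_n),\varphi\rangle$ thanks to the strong $L^2(0,T;H)$ convergence furnished by the Aubin--Lions lemma, and the stochastic integral by the standard stability lemma for It\^o integrals. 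This produces a process $\tilde u$ that solves (\ref{Euler:ito:eq}) and has the two asserted bounds, and that is weakly continuous in $[L^2(\Theta)]^2$.

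\emph{Pathwise uniqueness and strong continuity.} For uniqueness one writes the equation for the difference $w=u-\bar u$ of two solutions with the same data and the same Brownian motion: it is of the same transport type with the additional bilinear term $(w\cdot\nabla)u+(\bar u\cdot\nabla)w$, so in $d\|w\|_0^2$ the transport part again cancels and, using the two-dimensional bound $|\langle(w\cdot\nabla)u,w\rangle_0|\le C\|w\|_0\|w\|_1\|u\|_1$, the $H^1$-regularity of the solutions, and a localisation by the stopping times $\tau_R=\inf\{t:\int_0^t\|u(s)\|_1^2\,ds\ge R\}$, a Gronwall argument forces $w\equiv0$. Finally $t\mapsto\|u(t)\|_0=\|u_0\|_0$ is continuous by the energy equality, and together with the weak continuity in $[L^2(\Theta)]^2$ already obtained this upgrades $u$ to a strongly continuous $[L^2(\Theta)]^2$-valued process, which completes the proof.
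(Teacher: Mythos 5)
Your Galerkin scheme and a priori estimates are exactly the paper's (Section 3): the whole point is that the It\^o correction $\tfrac12\Delta u_n$ cancels the quadratic variation of $\sum_l\partial_l u_n\,dB^l$ in both the $L^2$ and the $H^1$ energy identities. One remark on the step you flag as "the main obstacle'': it is easier than you anticipate. Working directly with $\|\nabla u_n\|_0^2$ (rather than with the vorticity), the paper uses the pointwise two--dimensional identity $\sum_{i,j,k}\partial_i u^j\,\partial_j u^k\,\partial_i u^k=0$ for divergence-free fields together with $u=0$ on $\Gamma$ to get $\langle\nabla[(u\cdot\nabla)u],\nabla u\rangle_0=0$ exactly; the second-order drift terms cancel against the quadratic variation, and one is left with $\mathbb{E}\|u_n(t)\|_1^2=\|u_{0n}\|_1^2$ --- no boundary flux, no trace or interpolation inequalities, no Gronwall (Gronwall only enters in the periodic case of Section 5, where the noise is space-dependent). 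For the passage to the limit you take a genuinely different but equally standard route: Prokhorov--Skorokhod plus Aubin--Lions plus stability of stochastic integrals, whereas the paper follows Mikulevicius--Rozovskii, identifying each $\mathbb{P}^n$ as a solution of a martingale problem via the exponential functionals $L_t^{n,v}$, passing to the weak limit of the laws, and recovering the Brownian motion at the very end by a martingale representation theorem. Your route buys a more concrete a.s.\ convergent realisation; the paper's avoids Skorokhod's theorem and handles the identification of the limit through test functionals only.

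The genuine gap is your pathwise uniqueness argument. After the transport cancellation there is \emph{no residual dissipation}: in $d\|w\|_0^2$ the viscous term $-\|w\|_1^2\,dt$ is exactly consumed by the quadratic variation $\sum_l\|\partial_l w\|_0^2\,dt=\|w\|_1^2\,dt$, so you are left with $d\|w\|_0^2\le C\|u\|_1\|w\|_0\|w\|_1\,dt$ plus a martingale, and there is no $-\epsilon\|w\|_1^2$ available to absorb the factor $\|w\|_1$ by Young's inequality. A Gronwall argument in $\|w\|_0^2$ therefore does not close at the stated $H^1$ regularity --- this is precisely the uniqueness obstruction for deterministic 2D Euler with $H^1$ (i.e.\ merely $L^2$-vorticity) data, where one needs Yudovich-type regularity. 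Be aware that the paper is in the same position: "pathwise unique'' appears in the statement of Theorem~\ref{Main-result}, but the proof given there establishes only existence (via Theorem~\ref{aux-Main-result} and the martingale representation) and never addresses uniqueness. Relatedly, your derivation of strong continuity rests on the energy \emph{equality} $\|u(t)\|_0=\|u_0\|_0$ for the limit; weak lower semicontinuity only gives $\le$, and upgrading it to an equality requires an argument (strong convergence of $u_n(t)$ in $H$ for fixed $t$, or uniqueness) that you have not supplied.
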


\smallskip

To prove Theorem \ref{Main-result} we shall follow the methods in \cite{MiRo05} and \cite{Mi09}.

\smallskip

\section{Faedo-Galerkin approximations}
\label{section:approximations}

Following the arguments of Chapter III in \cite{Te01}, we consider the  weak formulation of Equation (\ref{Euler:ito:eq}),
namely:
\begin{equation}
\label{Euler:ito:eqweak}
\left\{\renewcommand{\arraystretch}{1.8}
\begin{array}{l}
d \langle u(t),v\rangle=-\left\{\frac{1}{2}\,\langle u(t),v\rangle_1+\langle (u(t) \cdot\nabla )u(t),v\rangle_0\right\}\,dt\\\hspace{8cm}+ \sum_{l=1}^2\langle\partial_l u(t),v\rangle_0 \,dB^l(t) \\
u(0)=u_0
\end{array}
\right.
\end{equation}
for all $v\in V$. Notice that Equation (\ref{Euler:ito:eqweak}) is equivalent to the following stochastic evolution equation in $V'$
\begin{equation}
\label{Euler:ito:eqevol}
\left\{\renewcommand{\arraystretch}{1.8}
\begin{array}{l}
d u(t)=-\left\{\frac{1}{2}\,\mathcal{A}u(t)+\mathcal{B}u(t)\right\}\,dt+ \sum_{l=1}^2\partial_l u(t)\,dB^l(t) \\
u(0)=u_0
\end{array}
\right.
\end{equation}
where $\mathcal{A}$ and $\mathcal{B}$ are defined as
$$
\langle \mathcal{A} u,v \rangle=\langle u,v \rangle_1, \quad
\langle \mathcal{B} u,v \rangle=\langle (u\cdot \nabla) u,v \rangle_0,
$$
for all $u,v\in V$.

\smallskip

It is well known that there exists an orthonormal basis of smooth functions $\{e_j\}$ for $H$, that is also orthogonal for $V$. In addition, this basis verifies
$$
\langle e_j,e_k\rangle_1=\lambda_j \langle e_j,e_k\rangle_0,
$$
${\rm div}\,e_j=0$ in $\Theta$ and $e_j=0$ on $\Gamma$, for all $j$, where $\lambda_j>0$ and $\lambda_j\rightarrow +\infty$ when $j\rightarrow +\infty$.
For each $n$ we define an approximate solution $u_n$ of (\ref{Euler:ito:eqweak}) as follows:
\begin{equation*}
u_n(t)=\sum_{i=1}^n g_n^i(t)e_i
\end{equation*}
and
\begin{equation}
\label{Euler:ito:eqapprox}
\left\{\renewcommand{\arraystretch}{1.8}
\begin{array}{l}
d \langle u_n(t),e_j\rangle=-\left\{\frac{1}{2} \langle u_n(t),e_j\rangle_1+\langle (u_n(t) \cdot\nabla )u_n(t),e_j\rangle_0\right\}\,dt\\\hspace{8cm}+ \sum_{l=1}^2\langle\partial_l u_n(t) ,e_j\rangle_0\,dB^l(t) \\
u_n(0)=u_{0n},
\end{array}
\right.
\end{equation}
for $t\in [0,T]$, $j=1,\ldots,n$, where $u_{0n}$ is the orthogonal projection in $H$ of $u_0$ onto the space $H_n:={\rm span}\,\{e_1,\ldots,e_n\}$, that is, $u_{0n}=\sum_{i=1}^n \langle u_0,e_j\rangle_0 e_j$.
The equations (\ref{Euler:ito:eqapprox}) form a stochastic differential equation system for the functions $g_n^1,\ldots,g_n^n$:
\begin{equation}
\label{Euler:ito:sde}
\left\{\renewcommand{\arraystretch}{1.8}
\begin{array}{l}
d g_n^j(t)=-\left\{\frac{\lambda_j}{2}g_n^j(t)+\sum_{i=1}^n\sum_{k=1,k\neq j}^n\langle (e_i\cdot\nabla) e_k,e_j\rangle_0 g_n^i(t)g_n^k(t)
\right\}\,dt\\\hspace{7cm}+ \sum_{l=1}^2\sum_{i=1}^n\langle\partial_l e_i,e_j\rangle_0 g_n^i(t) \,dB^l(t)   \\
g_n^j(0)=\langle u_0,e_j\rangle_0,
\end{array}
\right.
\end{equation}
Notice that the system $(\ref{Euler:ito:sde})$ has a unique strong solution in $\mathcal{C}([0,T];\mathbb{R})$ because its coefficients are defined by locally Lipschitz functions and the functions $\{e_k \}$ are smooth and bounded
in the domain $\Theta$. Thus, $u_n\in\mathcal{C}([0,T];H_n)$.

\smallskip

Let us obtain a priori estimates for $u_n$ which are independent on $n$.

\smallskip
\begin{lem}
\label{apriori-estimates}
Let $u_0\in V$. Then for each $T>0$
\begin{equation}
\label{est1}
\sup_n \,\sup_{0\leq t\leq T}\|u_n(t)\|_0^2\leq \|u_0\|_0^2,
\end{equation}
with probability $1$, and
\begin{equation}
\label{est2}
\sup_n\,\mathbb{E}\|u_n(t)\|_1^2\leq \|u_0\|_1^2
\end{equation}
for any $t\in [0,T]$.
\end{lem}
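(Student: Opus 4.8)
The plan is to apply the It\^o formula to the functions $\|u_n(t)\|_0^2$ and $\|u_n(t)\|_1^2$ and exploit the antisymmetry properties of the nonlinear term together with the skew-adjointness of the transport operators $\partial_l$ on divergence-free fields.

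For the first bound \eqref{est1}, I would start from the finite-dimensional It\^o system \eqref{Euler:ito:sde} (equivalently \eqref{Euler:ito:eqapprox}) and compute $d\|u_n(t)\|_0^2 = 2\langle u_n(t), du_n(t)\rangle + d\langle u_n\rangle(t)$. Three ingredients make the right-hand side collapse. First, the nonlinear term vanishes: $\langle (u_n\cdot\nabla)u_n, u_n\rangle_0 = 0$ because $u_n$ is divergence free and (in the Galerkin projection) still lies in a space where this trilinear form is antisymmetric in the last two arguments — this is exactly why the sum in \eqref{Euler:ito:sde} runs over $k\neq j$. Second, the It\^o (drift) part of the transport noise, $\tfrac12\langle\Delta u_n, u_n\rangle_0 = -\tfrac12\|u_n\|_1^2$, is produced by the Stratonovich-to-It\^o correction. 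Third, the quadratic variation term equals $\sum_l \|\partial_l u_n\|_0^2 = \|u_n\|_1^2$ (projected appropriately), which cancels the previous contribution. Hence $d\|u_n(t)\|_0^2$ reduces to the martingale part $2\sum_l \langle \partial_l u_n, u_n\rangle_0\, dB^l$. But $\int_\Theta (\partial_l u_n)\cdot u_n = \tfrac12\int_\Theta \partial_l(|u_n|^2) = 0$ after integrating by parts, using that $u_n$ vanishes on $\Gamma$ (each $e_j$ does). So $\|u_n(t)\|_0^2 = \|u_{0n}\|_0^2 \le \|u_0\|_0^2$ almost surely, the last inequality because $u_{0n}$ is the $H$-orthogonal projection of $u_0$; this is even an identity, giving \eqref{est1} pathwise.

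For the second bound \eqref{est2}, I would apply It\^o to $\|u_n(t)\|_1^2 = \langle \mathcal A u_n, u_n\rangle$, i.e. test \eqref{Euler:ito:eqevol} (restricted to $H_n$) against $\mathcal A u_n$, or equivalently use the eigenbasis and write $\|u_n\|_1^2 = \sum_j \lambda_j |g_n^j|^2$ and apply It\^o to each $|g_n^j|^2$. The viscous term contributes $-\|u_n\|_{H^2}^2$-type dissipation $-\sum_j\lambda_j^2|g_n^j|^2$; the It\^o correction from the noise contributes $+\tfrac12\sum_l\sum_j\lambda_j\langle\partial_l^2 u_n,\dots\rangle$; and the quadratic variation contributes $\sum_l\|\partial_l u_n\|_1^2$. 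The expectation of the martingale term vanishes. The key cancellation is that $\sum_l \langle \partial_l^2 u_n, u_n\rangle_1 + \sum_l \|\partial_l u_n\|_1^2 = 0$ after integration by parts (boundary terms vanish because $u_n$ and its relevant derivatives are controlled, and the skew-symmetry of $\partial_l$ in the $\langle\cdot,\cdot\rangle_1$ inner product holds on the Galerkin space); thus the transport-noise contribution to $\mathbb E\|u_n\|_1^2$ is exactly zero, mirroring the $H$ case. It then remains to handle the nonlinear term $\langle \mathcal B u_n, \mathcal A u_n\rangle = \langle (u_n\cdot\nabla)u_n, \mathcal A u_n\rangle$: in 2D one uses the classical enstrophy identity that this term vanishes (or is controlled), which together with the negative dissipation gives $\tfrac{d}{dt}\mathbb E\|u_n\|_1^2 \le 0$, hence $\mathbb E\|u_n(t)\|_1^2 \le \|u_{0n}\|_1^2 \le \|u_0\|_1^2$.

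The main obstacle is the justification of the two cancellations involving the transport noise — specifically, verifying that on the finite-dimensional space $H_n$ the operators $\partial_l$ are still skew-adjoint for both $\langle\cdot,\cdot\rangle_0$ and $\langle\cdot,\cdot\rangle_1$, so that the It\^o correction exactly annihilates the quadratic variation. The subtlety is that $\partial_l e_j$ need not lie in $H_n$, so one must either argue that only the $H_n$-components matter (the Galerkin projection $P_n$ commutes appropriately, or the cross terms drop by orthogonality) or work directly with the non-projected operator and control the discrepancy; the boundary integrals arising in the integration by parts must also be shown to vanish, which relies on $u_n|_\Gamma = 0$ and the regularity of the $e_j$. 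A secondary point is the 2D vorticity argument for the nonlinear term in the $\|\cdot\|_1$ estimate, which is standard but must be phrased so as to survive the Galerkin truncation. Once these are in place, both estimates follow by taking suprema in $n$ and in $t$.
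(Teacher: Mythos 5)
Your proposal follows essentially the same route as the paper: It\^o's formula for $\|u_n\|_0^2$ and for $\|\nabla u_n\|_0^2$, exact cancellation of the It\^o correction $\tfrac12\Delta$ against the quadratic variation $\sum_l\|\partial_l u_n\|_0^2$ (resp.\ the second-derivative analogue), vanishing of $\langle(u_n\cdot\nabla)u_n,u_n\rangle_0$ and of the 2D enstrophy-type term $\langle\nabla[(u_n\cdot\nabla)u_n],\nabla u_n\rangle_0$, and the boundary integration by parts killing the $L^2$ martingale while the $H^1$ martingale is killed by taking expectations. The Galerkin-projection subtlety you flag (that $\partial_l e_j\notin H_n$) is real but harmless here, since replacing $\partial_l u_n$ by $P_n\partial_l u_n$ only decreases the quadratic-variation term, turning the exact cancellation into an inequality of the favorable sign; the paper silently ignores this point.
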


\begin{proof}
Applying It\^o's formula (see for instance Theorem 4.3. in \cite{BP01}) to $u_n$ in the evolution formulation (\ref{Euler:ito:eqevol}) and to $\nabla u_n$, respectively, and taking in account the identities
$$
\langle (u\cdot \nabla)v,v\rangle_0=0, \quad \langle \nabla[(u\cdot \nabla)u],\nabla u\rangle_0=0,
$$
that hold for all $u,v\in V$, we obtain
\begin{align*}
\|u_n(t)\|_0^2&=\|u_{0n}\|_0^2-2\int_0^t\left\langle \frac{1}{2}\mathcal{A}u_n(s),u_n(s)\right\rangle\,ds-2\int_0^t\left\langle \mathcal{B}u_n(s),u_n(s)\right\rangle\,ds\\
&\quad+2  \sum_{l=1}^2\int_0^t\left \langle \partial_l u_n(s),u_n(s)\right \rangle_0\,dB^l(s)+\int_0^t \|u_n(s)\|_1^2\,ds\\
&=\|u_{0n}\|_0^2-\int_0^t\left\langle u_n(s),u_n(s)\right\rangle_1\,ds-2\int_0^t\left\langle (u_n(s)\cdot \nabla)u_n(s),u_n(s)\right\rangle_0\,ds\\
&\quad+  \sum_{l=1}^2\int_0^t\left(\int_\Theta \partial_l \left(\sum_{i=1}^2 \left(u^i_n(s,\theta)\right)^2 \right)\,d\theta\right)\,dB^l(s)
+\int_0^t \|u_n(s)\|_1^2\,ds\\
&=\|u_{0n}\|_0^2+  \sum_{l=1}^2\int_0^t\left(\int_\Gamma \eta^l \left(\sum_{i=1}^2 \left(u^i_n(s,\theta)\right)^2 \right)\,d\mathcal{S}\right)\,dB^l(s)=\|u_{0n}\|_0^2,
\end{align*}
where $\eta=(\eta^1,\eta^2)$ denotes the unit exterior normal vector, and
\begin{align*}
\|\nabla u_n(t)\|_0^2&=\|\nabla u_{0n}\|_0^2-2\int_0^t\left\langle \frac{1}{2}\nabla[\mathcal{A} u_n(s)],\nabla u_n(s)\right\rangle\,ds\\
&\quad-2\int_0^t\left\langle \nabla[\mathcal{B}u_n(s)],\nabla u_n(s)\right\rangle\,ds+2  \sum_{l=1}^2\int_0^t\left \langle \nabla\partial_l u_n(s),\nabla u_n(s)\right \rangle_0\,dB^l(s)\\
&\quad+\int_0^t \sum_{j,l=1}^2\|\partial_j\partial_l u_n(s)\|_0^2\,ds\\
&=\|\nabla u_{0n}\|_0^2-\int_0^t \sum_{j,l=1}^2\|\partial_j\partial_l u_n(s)\|_0^2\,ds\\
&\quad-2\int_0^t\left\langle \nabla[(u_n(s)\cdot\nabla) u_n(s)],\nabla u_n(s)\right\rangle_0\,ds\\
&\quad-2  \sum_{l=1}^2\int_0^t\left \langle \partial_l u_n(s),{\rm div}[\nabla u_n(s)]\right \rangle_0\,dB^l(s)\\
&\quad+\int_0^t \sum_{j,l=1}^2\|\partial_j\partial_l u_n(s)\|_0^2\,ds\\
&=\|\nabla u_{0n}\|_0^2-2  \sum_{l=1}^2\int_0^t\left \langle \partial_l u_n(s),\Delta u_n(s)\right \rangle_0\,dB^l(s).
\end{align*}

\smallskip

To sum up,
\begin{align*}
\|u_n(t)\|_0^2&=\|u_{0n}\|_0^2,\\
\|u_n(t)\|_1^2&=\| u_{0n}\|_1^2-2  \sum_{l=1}^2\int_0^t\left \langle \partial_l u_n(s),\Delta u_n(s)\right \rangle_0\,dB^l(s).
\end{align*}

\smallskip
 Thus,
\begin{align*}
\sup_{0\leq t\leq T}\hbox {ess sup}_{\Omega}\|u_n(t)\|_0^2\leq \|u_0\|_0^2, \quad
\sup_{0\leq t\leq T}\mathbb{E}\|u_n(t)\|_1^2=\|u_{0n}\|_1^2\leq \|u_0\|_1^2.
\end{align*}

This finishes the proof of this lemma.
\end{proof}

\smallskip

\section{Existence of weak solutions}
\label{section:existence}

For each $n$, the solution $u_n$ of (\ref{Euler:ito:eqapprox}) induces a measure $\mathbb{P}^n$ on a trajectory space determined by the estimates of Lemma \ref{apriori-estimates}.

\smallskip

For $\kappa\in]-\infty,\infty[$, write $\Lambda^\kappa=\Lambda^\kappa_\theta=\left(1-\Delta\right)^{\kappa/2}$ and define the space $[H^\kappa(\mathbb{R}^2)]^2$ as the space of generalized functions $v$ with the finite norm  $\|v\|_\kappa=\|\Lambda^\kappa v\|_0$. The spaces $H^\kappa(\mathbb{R}^2)$ are a particular case of Bessel potential spaces $W^{\kappa,p}(\mathbb{R}^2)$ with $p=2$. Notice also that if $\kappa>0$ these spaces are known as fractional order Sobolev spaces, and if $\kappa$ is a non-negative integer, they are Sobolev spaces.  For a smooth bounded domain $G\subset \mathbb{R}^2$, we denote $[H^{\kappa}(G)]^2$ the space of all generalized functions $v$ on $G$ that can be extended to a generalized functions in $[H^{\kappa}(\mathbb{R}^2)]^2$ with the norm
$$
\|u\|_\kappa=\inf \left\{\|\tilde{v}\|_\kappa:\,\tilde{v}\in[H^{\kappa}(\mathbb{R}^2)]^2,\,\tilde{v}=v\text{ a.e. in }G \right\}.
$$
The duality between $[H^{\kappa}(G)]^2$ and $[H^{-\kappa}(G)]^2$, $k\in ]-\infty,\infty[$, is defined by
$$
\langle \phi, \psi \rangle=\langle \Lambda^\kappa \phi, \Lambda^{-\kappa}\psi \rangle_0,
$$
for any $\phi \in [H^{\kappa}(G)]^2$, $\psi \in [H^{-\kappa}(G)]^2$.

\smallskip

Fix $\mathcal{U}=[H^{\kappa}(\Theta)]^2$, $\kappa>2$. Denote by $\mathcal{U}'$ its dual space with a topology defined by the seminorm
$$
\|\varphi\|_{\mathcal{U}'}=\sup\{|\varphi(v)|:\,v\in \mathcal{C}_0^\infty(\Theta),\,\|v\|_{\kappa}\leq 1\}.
$$

\smallskip

\begin{lem}
\label{compact-embed}
The embedding $L^2(\Theta)\rightarrow \mathcal{U}'$ is compact.
\end{lem}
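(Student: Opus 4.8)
The plan is to realize the embedding $[L^2(\Theta)]^2\to\mathcal{U}'$ as the Banach-space adjoint of a compact Sobolev inclusion and then quote Schauder's theorem (the adjoint of a compact operator is compact). Let $X$ denote the closure of $[\mathcal{C}_0^\infty(\Theta)]^2$ in $\mathcal{U}=[H^{\kappa}(\Theta)]^2$, so that $X=[H^{\kappa}_0(\Theta)]^2$, and let $j\colon X\hookrightarrow[L^2(\Theta)]^2$ be the canonical inclusion. Since $\kappa>2>1$, the $\|\cdot\|_\kappa$-norm dominates the $[H^1(\Theta)]^2$-norm, so $X\subset[H^1_0(\Theta)]^2$ continuously, and $[H^1_0(\Theta)]^2\hookrightarrow[L^2(\Theta)]^2$ is compact by the Rellich--Kondrachov theorem on the bounded domain $\Theta$; hence $j$ is compact.

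Next I would check that the map in the statement \emph{is} $j^*$. Identifying $([L^2(\Theta)]^2)'$ with $[L^2(\Theta)]^2$ through the $L^2$ pairing $\langle\cdot,\cdot\rangle_0$, the adjoint $j^*\colon[L^2(\Theta)]^2\to X'$ sends $f$ to the functional $v\mapsto\langle f,v\rangle_0$ on $X$; testing against the dense subset $[\mathcal{C}_0^\infty(\Theta)]^2$ of $X$ gives
$$
\|j^*f\|_{X'}=\sup\bigl\{|\langle f,v\rangle_0|:\ v\in[\mathcal{C}_0^\infty(\Theta)]^2,\ \|v\|_{\kappa}\leq 1\bigr\}=\|f\|_{\mathcal{U}'}.
$$
Thus $j^*$ is precisely the inclusion $[L^2(\Theta)]^2\to\mathcal{U}'$ equipped with the seminorm topology fixed in the text; it is moreover injective, because $\|f\|_{\mathcal{U}'}=0$ forces $\langle f,v\rangle_0=0$ for every $v\in[\mathcal{C}_0^\infty(\Theta)]^2$, hence $f=0$ in $[L^2(\Theta)]^2$. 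By Schauder's theorem $j^*$ is compact, which is the assertion.

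If one prefers a self-contained argument avoiding Schauder duality, the same conclusion follows directly: given $(f_m)$ bounded in $[L^2(\Theta)]^2$, extract a weakly convergent subsequence $f_{m_k}\rightharpoonup f$; writing $B$ for the $\|\cdot\|_\kappa$-unit ball of $[\mathcal{C}_0^\infty(\Theta)]^2$, Rellich--Kondrachov makes the $[L^2(\Theta)]^2$-closure of $B$ compact in $[L^2(\Theta)]^2$, and splitting $\langle f_{m_k}-f,v\rangle_0$ along an $[L^2]^2$-convergent sequence drawn from $B$ shows $\sup_{v\in B}|\langle f_{m_k}-f,v\rangle_0|\to 0$, i.e. $f_{m_k}\to f$ in $\mathcal{U}'$.

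The content here is soft; the only point needing care is bookkeeping with the topology of $\mathcal{U}'$ — one must keep in mind that the stated seminorm makes $\mathcal{U}'$ the dual of $X=[H^{\kappa}_0(\Theta)]^2$ rather than of the full space $[H^{\kappa}(\Theta)]^2$, which is why the inclusion $[\mathcal{C}_0^\infty(\Theta)]^2\subset X$, and not the inclusion $X\subset\mathcal{U}$, is the object one should dualize. The hypothesis $\kappa>2$ is not used here beyond $\kappa\geq 1$.
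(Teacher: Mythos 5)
Your proof is correct. The paper itself gives no argument here, deferring entirely to Lemma~2.6 of \cite{MiRo05}; your realization of the embedding as the Schauder adjoint of the compact inclusion $[H^\kappa_0(\Theta)]^2\hookrightarrow[H^1_0(\Theta)]^2\hookrightarrow[L^2(\Theta)]^2$ (together with the correct observation that the seminorm on $\mathcal{U}'$ only tests against $\mathcal{C}_0^\infty(\Theta)$, so the relevant predual is $[H^\kappa_0(\Theta)]^2$) is the standard argument behind that citation, and your direct weak-compactness variant is an acceptable self-contained alternative.
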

\begin{proof}
See the proof of Lemma 2.6 in \cite{MiRo05}.
\end{proof}

\smallskip

Let $\mathcal{X}_1=\mathcal{C}\left([0,T],\mathcal{U}'\right)$ be the set of $\mathcal{U}'$-valued trajectories with the topology $\mathcal{T}_1$ of the uniform convergence on $[0,T]$. Let $\mathcal{X}_2=\mathcal{C}\left([0,T],[L^2_w(\Theta)]^2\right)$ be the set of $[L^2(\Theta)]^2$-valued weakly continuous functions with the topology $\mathcal{T}_2$ of the uniform weak convergence on $[0,T]$. Let $\mathcal{X}_3=L^2_w\left(0,T;[H^1(\Theta)]^2\right)$ be the set of $[H^1(\Theta)]^2$-valued square integrable functions on $[0,T]$ with a topology $\mathcal{T}_3$ of weak convergence on finite intervals. Finally consider $\mathcal{X}_4=L^2\left(0,T; [L^2(\Theta)]^2\right)$ with the topology $\mathcal{T}_4$ associated with the norm of this space.

\smallskip

\begin{lem}
\label{compact-rel}
Let $\mathcal{X}=\cap_{i=1}^4\mathcal{X}_i$ and $\mathcal{T}$ be the supremum of the corresponding topologies. \newline Then $K\subset \mathcal{X}$ is relatively compact with respect to $\mathcal{T}$ if the following conditions hold:
\begin{itemize}
\item[(i)]$\sup_{\varphi\in K}\sup_{0\leq r\leq T}\|\varphi(r)\|_0< \infty$, \smallskip
\item[(ii)]$\sup_{\varphi\in K}\int_0^T \|\varphi(s)\|_1^2\,ds<\infty$, \smallskip
\item[(iii)]$\lim_{\delta\rightarrow 0}\sup_{\varphi}\sup_{|t-s|\leq \delta,0\leq s,t\leq T}\|\varphi(t)-\varphi(s)\|_{\mathcal{U}'}=0$.
\end{itemize}
\end{lem}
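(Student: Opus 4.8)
The plan is to reduce everything to extracting, from an arbitrary sequence $(\varphi_n)_n\subset K$, a subsequence that converges in each of the four topologies to one and the same limit lying in $\mathcal{X}=\cap_{i=1}^4\mathcal{X}_i$. Since on sets that are bounded in the senses of (i) and (ii) the topology $\mathcal{T}$ is metrizable (the underlying spaces being separable), this sequential statement is equivalent to relative $\mathcal{T}$-compactness. Write $M=\sup_{\varphi\in K}\sup_r\|\varphi(r)\|_0$ and $N=\sup_{\varphi\in K}\int_0^T\|\varphi(s)\|_1^2\,ds$, both finite by (i) and (ii). I would first handle $\mathcal{X}_1$: for each $t$ the bound (i) puts $\{\varphi_n(t)\}_n$ in a bounded subset of $[L^2(\Theta)]^2$, which by \lemref{compact-embed} is relatively compact in $\mathcal{U}'$, so together with the equicontinuity (iii) the Arzel\`a--Ascoli theorem yields a subsequence (not relabelled) and a limit $\varphi\in\mathcal{C}([0,T],\mathcal{U}')$ with $\sup_{0\le t\le T}\|\varphi_n(t)-\varphi(t)\|_{\mathcal{U}'}\to0$.

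Next I would upgrade this to $\mathcal{X}_2$ and $\mathcal{X}_3$. For fixed $t$, weak-$L^2$ compactness from (i) together with the continuous embedding $[L^2(\Theta)]^2\hookrightarrow\mathcal{U}'$ forces every weak-$L^2$ cluster point of $(\varphi_n(t))$ to coincide with $\varphi(t)$, so $\varphi(t)$ is identified with an element of $[L^2(\Theta)]^2$ of norm $\le M$ and $\varphi_n(t)\rightharpoonup\varphi(t)$ in $[L^2(\Theta)]^2$ for all $t$. Uniform-in-$t$ weak convergence, i.e. $\mathcal{T}_2$, then follows by splitting a test function $h\in[L^2(\Theta)]^2$ as $h_\varepsilon+(h-h_\varepsilon)$ with $h_\varepsilon\in\mathcal{C}_0^\infty(\Theta)$ and $\|h-h_\varepsilon\|_0\le\varepsilon$: the smooth part is controlled uniformly via $|\langle\varphi_n(t)-\varphi(t),h_\varepsilon\rangle_0|\le\|h_\varepsilon\|_\kappa\sup_t\|\varphi_n(t)-\varphi(t)\|_{\mathcal{U}'}$ by Step 1, and the remainder by $2M\varepsilon$; weak continuity of $\varphi$ is inherited because $t\mapsto\langle\varphi(t),h\rangle_0$ is a uniform limit of continuous scalar functions. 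For $\mathcal{X}_3$, (ii) makes $(\varphi_n)$ bounded in the reflexive Hilbert space $L^2(0,T;[H^1(\Theta)]^2)$, hence a subsequence converges weakly there, and by the convergences already obtained the weak limit is again $\varphi$.

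The heart of the proof is $\mathcal{X}_4$, namely promoting these weak and uniform convergences to \emph{strong} convergence in $L^2(0,T;[L^2(\Theta)]^2)$. My plan is an Aubin--Lions--Simon type argument. From $[H^1(\Theta)]^2\hookrightarrow\hookrightarrow[L^2(\Theta)]^2$ (Rellich) and $[L^2(\Theta)]^2\hookrightarrow\mathcal{U}'$ continuously (a consequence of \lemref{compact-embed}) one obtains the Ehrling-type inequality: for every $\varepsilon>0$ there is $C_\varepsilon>0$ with $\|v\|_0\le\varepsilon\|v\|_1+C_\varepsilon\|v\|_{\mathcal{U}'}$ for all $v\in[H^1(\Theta)]^2$. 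Applying it to $\varphi_n-\varphi_m$ and integrating over $[0,T]$ gives
\[
\int_0^T\|\varphi_n(s)-\varphi_m(s)\|_0^2\,ds\le 8\,\varepsilon^2 N+2\,C_\varepsilon^2\,T\,\bigl(\sup_{0\le t\le T}\|\varphi_n(t)-\varphi_m(t)\|_{\mathcal{U}'}\bigr)^2 ,
\]
where the last term tends to $0$ as $n,m\to\infty$ by Step 1; letting then $\varepsilon\to0$ shows $(\varphi_n)$ is Cauchy in $L^2(0,T;[L^2(\Theta)]^2)$, and its strong limit coincides with $\varphi$ by Step 2. Collecting the four convergences shows $\varphi\in\mathcal{X}$ and $\varphi_n\to\varphi$ in $\mathcal{T}$, which proves the lemma.

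I expect the $\mathcal{X}_4$ step to be the main obstacle: it is where one must genuinely exploit the interplay between the spatial smoothing in (ii) and the temporal equicontinuity in (iii) through a compact embedding, rather than merely combining soft weak-compactness facts as in the first three steps. Everything else is a fairly mechanical assembly of Arzel\`a--Ascoli, weak sequential compactness in Hilbert spaces, and the density of $\mathcal{C}_0^\infty(\Theta)$ in $[L^2(\Theta)]^2$; one should also check the minor point that $\mathcal{T}$ is metrizable on the bounded sets involved so that relative sequential compactness gives relative compactness.
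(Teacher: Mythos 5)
Your proof is correct and follows essentially the same route as the argument the paper defers to (Lemma~2.7 of \cite{MiRo05}): Arzel\`a--Ascoli in $\mathcal{C}([0,T],\mathcal{U}')$ via (i), (iii) and Lemma~\ref{compact-embed}, weak sequential compactness for $\mathcal{T}_2$ and $\mathcal{T}_3$, and an Ehrling-type interpolation between the $H^1$ bound (ii) and the uniform $\mathcal{U}'$-convergence to upgrade to strong convergence in $L^2(0,T;[L^2(\Theta)]^2)$, together with the (correctly flagged) metrizability of $\mathcal{T}$ on sets satisfying (i)--(ii). Since the paper gives no argument beyond the citation, your write-up simply supplies the standard details of that proof.
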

\begin{proof}
See the proof of Lemma 2.7 in \cite{MiRo05}.
\end{proof}

\smallskip

Denote $u$ the canonical process in $\mathcal{X}$: $u(t)=u(t,w)=w(t)=w(t,\theta)$, $w\in \mathcal{X}$. Let $\mathcal{D}_t=\sigma\left\{u(s), s\leq t\right\}$, $\mathbb{D}=\left\{\mathcal{D}_{t+}\right\}_{0\leq t\leq T}$, $\mathcal{D}=\mathcal{D}_T$. For each $n$, the approximation $u_n$ (satisfying (\ref{est1}), (\ref{est2})) defines a measure $\mathbb{P}^n$ on $(\mathcal{X},\mathcal{D})$.

\smallskip

\begin{cor}
\label{cor:compact-rel}
The set $\{\mathbb{P}^n,\,n\geq 1\}$ is relatively weakly compact on $(\mathcal{X},\mathcal{T})$.
\end{cor}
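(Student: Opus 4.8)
The plan is to establish tightness of the family $\{\mathbb{P}^n\}$ on $(\mathcal{X},\mathcal{T})$ and then conclude by Prohorov's theorem, using \lemref{compact-rel} to recognize the requisite compact sets. Concretely, it suffices to show that for every $\varepsilon>0$ there are a constant $R>0$ and a modulus $\rho$ with $\rho(\delta)\to0$ as $\delta\to0$ such that, with probability at least $1-\varepsilon$ uniformly in $n$, the trajectory $u_n$ satisfies (i) with the deterministic bound $\|u_0\|_0^2$, (ii) with the constant $R$, and (iii) with the modulus $\rho$; then by \lemref{compact-rel} the closure of the corresponding subset of $\mathcal{X}$ is compact, and this gives tightness.

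Conditions (i) and (ii) are handled directly by \lemref{apriori-estimates}. Indeed, (\ref{est1}) gives $\sup_{0\le r\le T}\|u_n(r)\|_0^2\le\|u_0\|_0^2$ almost surely, and (\ref{est2}) gives $\mathbb{E}\int_0^T\|u_n(s)\|_1^2\,ds\le T\|u_0\|_1^2$, whence by Chebyshev's inequality $\int_0^T\|u_n(s)\|_1^2\,ds\le R$ off an event of probability at most $T\|u_0\|_1^2/R$, which is $<\varepsilon/3$ once $R$ is large, uniformly in $n$.

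The substantive point is condition (iii), the equicontinuity in $\mathcal{U}'=[H^{-\kappa}(\Theta)]^2$ with $\kappa>2$. From the projected form of (\ref{Euler:ito:eqevol}) we write, for $0\le s\le t\le T$,
$$
u_n(t)-u_n(s)=-\int_s^t\Bigl(\tfrac12\mathcal{A}u_n(r)+\mathcal{B}u_n(r)\Bigr)\,dr+\sum_{l=1}^2\int_s^t\partial_l u_n(r)\,dB^l(r),
$$
and estimate the three terms in $\mathcal{U}'$. Since $\kappa>2$ we have $[H^\kappa(\Theta)]^2\hookrightarrow[W^{1,\infty}(\Theta)]^2$; pairing with a test field $v$, $\|v\|_\kappa\le1$, and using $\langle\mathcal{A}u_n,v\rangle=\langle u_n,v\rangle_1$, the divergence-free identity $\langle\mathcal{B}u_n,v\rangle=-\langle u_n\otimes u_n,\nabla v\rangle_0$, and $\|\partial_l u_n\|_{\mathcal{U}'}\lesssim\|u_n\|_0$ (valid since $1-\kappa<0$), one obtains
$$
\Bigl\|\int_s^t\mathcal{A}u_n\,dr\Bigr\|_{\mathcal{U}'}\lesssim|t-s|^{1/2}\Bigl(\int_0^T\|u_n(r)\|_1^2\,dr\Bigr)^{1/2},\qquad\Bigl\|\int_s^t\mathcal{B}u_n\,dr\Bigr\|_{\mathcal{U}'}\lesssim|t-s|\,\|u_0\|_0^2 .
$$
For the stochastic integral $M_n(t)=\sum_{l}\int_0^t\partial_l u_n(r)\,dB^l(r)$ fix $p>2$ and apply the Burkholder--Davis--Gundy inequality in $\mathcal{U}'$ together with $\|\partial_l u_n(r)\|_{\mathcal{U}'}\lesssim\|u_n(r)\|_0\le\|u_0\|_0$ and (\ref{est1}):
$$
\mathbb{E}\,\|M_n(t)-M_n(s)\|_{\mathcal{U}'}^p\le C_p\,\mathbb{E}\Bigl(\int_s^t\sum_{l=1}^2\|\partial_l u_n(r)\|_{\mathcal{U}'}^2\,dr\Bigr)^{p/2}\le C_p'\,|t-s|^{p/2} .
$$
Because $p/2>1$, a Kolmogorov-type continuity estimate converts this into a uniform bound on the modulus of continuity of $M_n$ in $\mathcal{U}'$, i.e. $\lim_{\delta\to0}\sup_n\mathbb{P}^n\{\sup_{|t-s|\le\delta}\|M_n(t)-M_n(s)\|_{\mathcal{U}'}>\eta\}=0$ for each $\eta>0$; combined with the two bounds above (the $\mathcal{A}$-term controlled on the event from (ii), the $\mathcal{B}$-term deterministically) this furnishes a modulus $\rho$ and an event of probability $\ge 1-\varepsilon/3$, uniformly in $n$, on which (iii) holds.

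Intersecting the three events, each of probability at least $1-\varepsilon/3$, and passing to the closure of the resulting subset of $\mathcal{X}$ yields, by \lemref{compact-rel}, a compact $K_\varepsilon\subset\mathcal{X}$ with $\inf_n\mathbb{P}^n(K_\varepsilon)\ge1-\varepsilon$; Prohorov's theorem then gives relative weak compactness of $\{\mathbb{P}^n\}$. The main obstacle is precisely the last estimate: producing the $p$-th moment bound on the increments of the stochastic term with a constant independent of $n$ and converting it into the uniform-in-$n$ equicontinuity (iii); everything else reduces to the a priori bounds of \lemref{apriori-estimates} and the embedding $\kappa>2$. This is the argument carried out for the analogous statement in \cite{MiRo05}.
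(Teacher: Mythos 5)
Your argument is correct and follows exactly the route the paper intends: the paper's proof of this corollary is only a pointer to Corollary 2.8 of \cite{MiRo05} combined with the a priori estimates (\ref{est1})--(\ref{est2}) and Lemmas \ref{operator-estimates}, \ref{compact-embed}, \ref{compact-rel}, and your write-up is precisely that argument with the details filled in (tightness via conditions (i)--(iii) of \lemref{compact-rel}, the drift increments controlled by \lemref{operator-estimates}, the martingale increments by Burkholder--Davis--Gundy and a Kolmogorov-type modulus estimate, then Prohorov). No gaps; this matches the cited proof in both structure and substance.
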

\begin{proof}
The proof is obtained by following the arguments used in the proof of Corollary 2.8 in \cite{MiRo05} (see also Corollary 3.4 in \cite{Mi09}) and  our a priori estimates $(\ref{est1})$ and $(\ref{est2})$, together with Lemmas \ref{operator-estimates}, \ref{compact-embed} and \ref{compact-rel}.
\end{proof}

\begin{lem}
\label{operator-estimates}
There exists a constant $C$ independent of $n$ such that for all $u\in V$,
\begin{align*}
\|\mathcal{A}u\|_{-1}\leq C\|u\|_{1},\quad \sum_{l=1}^2\|\partial_l u\|_{-1}\leq C \|u\|_0,\quad \|\mathcal{B}u\|_{-\kappa}\leq C\|u\|_{0}^2.
\end{align*}
\end{lem}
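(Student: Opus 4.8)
The plan is to estimate each of the three operators by testing against smooth divergence-free test functions and using duality together with the embedding $\mathcal{U}=[H^\kappa(\Theta)]^2 \hookrightarrow [C^1(\Theta)]^2$ for $\kappa>2$ (Sobolev embedding in dimension $2$). For the first estimate, recall $\langle \mathcal A u, v\rangle = \langle u,v\rangle_1 = \sum_i \langle \partial_i u, \partial_i v\rangle_0$. Integrating by parts (no boundary term since $v$ may be taken in $\mathcal V$, and then the inequality extends by density) gives $|\langle \mathcal A u,v\rangle| = |\sum_i \langle u, \partial_i^2 v\rangle_0| \le \|u\|_0 \, \|v\|_2 \le \|u\|_0\,\|v\|_\kappa$; but we actually want the bound in terms of $\|u\|_1$ against $\|v\|_1$, which is immediate from Cauchy–Schwarz: $|\langle \mathcal A u,v\rangle|\le \|u\|_1\|v\|_1$, hence $\|\mathcal A u\|_{-1}\le \|u\|_1$. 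For the second estimate, $|\langle \partial_l u, v\rangle_0| = |\langle u,\partial_l v\rangle_0| \le \|u\|_0\,\|\partial_l v\|_0\le \|u\|_0\,\|v\|_1$, so $\|\partial_l u\|_{-1}\le \|u\|_0$ and summing over $l$ gives the middle inequality with $C=2$.

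For the nonlinear term the plan is as follows. Using the divergence-free condition, one has the identity $\langle (u\cdot\nabla)u, v\rangle_0 = -\langle (u\cdot\nabla)v, u\rangle_0 = -\sum_{i,j}\langle u^i u^j, \partial_i v^j\rangle_0$ after an integration by parts (legitimate for $v\in\mathcal V$ and $u\in V$, again extending by density in $u$). Therefore
\begin{align*}
|\langle \mathcal B u, v\rangle| = \Big|\sum_{i,j}\int_\Theta u^i u^j\,\partial_i v^j\,d\theta\Big|
\le \|u\|_0^2 \;\max_{i,j}\|\partial_i v^j\|_{L^\infty(\Theta)}
\le C\,\|u\|_0^2\,\|v\|_\kappa,
\end{align*}
where the last step uses $\|\nabla v\|_{L^\infty}\le C\|v\|_{H^\kappa}$ valid for $\kappa>2$ in two dimensions. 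Taking the supremum over $v\in C_0^\infty(\Theta)$ with $\|v\|_\kappa\le 1$ yields $\|\mathcal B u\|_{-\kappa}\le C\|u\|_0^2$.

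The only genuinely delicate point is the justification of the two integrations by parts used above — in particular that the boundary terms vanish and that the bilinear/trilinear forms extend continuously from $\mathcal V$ to $V$. For the trilinear term this is the standard fact (see Temam \cite{Te01}) that $b(u,v,w)=\langle (u\cdot\nabla)v,w\rangle_0$ is well defined and continuous on $V\times V\times V$ in dimension $2$, with $b(u,v,v)=0$; the identity $\langle(u\cdot\nabla)u,v\rangle_0=-b(u,v,u)$ then follows by antisymmetry. Since $v$ here ranges over $C_0^\infty(\Theta)$, all boundary contributions are zero outright, and the $H^\kappa$-norm controls $\|v\|_{C^1}$ exactly because $\kappa>2$. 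I expect no further obstacle; the estimates are essentially an exercise in duality and the two-dimensional Sobolev embedding, and they mirror the corresponding bounds in \cite{MiRo05} and \cite{Mi09}.
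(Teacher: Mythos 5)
Your proposal is correct and follows essentially the same route as the paper: Cauchy--Schwarz from the definition of $\mathcal{A}$ for the first bound, one integration by parts for the second, and the antisymmetry identity $\langle(u\cdot\nabla)u,v\rangle_0=-\langle(u\cdot\nabla)v,u\rangle_0$ combined with the Sobolev embedding $[H^\kappa(\Theta)]^2\hookrightarrow [C^1(\Theta)]^2$ (valid since $\kappa>2$) for the third. The extra detour in your first paragraph (integrating by parts twice to reach $\|u\|_0\|v\|_2$) is unnecessary but harmless, since you correctly discard it in favor of the direct Cauchy--Schwarz bound.
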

\begin{proof}
$\|\mathcal{A}u\|_{-1}\leq C\|u\|_{1}$ holds by definition of the operator $\mathcal{A}$.

\smallskip

For any $v\in V$ we obtain that
\begin{align*}
\left|\sum_{l=1}^2 \langle \partial_l u,v \rangle \right|=\left|-\sum_{l=1}^2 \langle  u,\partial_l v \rangle \right|=\left|\sum_{l=1}^2 \langle  u,\partial_l v \rangle_0 \right|\leq \|u\|_0\|v\|_1.
\end{align*}

\smallskip

For any $v\in [\mathcal{C}^\infty_0(\Theta)]^2$, applying integration by parts and Sobolev's embedding theorem,
\begin{align*}
\left|\langle\mathcal{B}u,v\rangle\right|=\left|\langle (u\cdot \nabla) u,v\rangle\right|=\left|-\langle (u\cdot \nabla) v,u\rangle\right|\leq \underset{\theta \in \Theta}{\sup}\left\{\left|\nabla v(\theta)\right|\right\}\|u\|_0^2\leq C\|v\|_{\kappa}|\|u\|_0^2.
\end{align*}
\end{proof}

\smallskip

Now we shall identify $\mathbb{P}^n$ to a solution of a martingale problem. Let us give the definition of our martingale problem.

\smallskip

For any $v\in\mathcal{C}_0^\infty(\Theta)$, we denote
\begin{equation}
\varphi^v(u(s))=i\left\langle -\frac{1}{2}\mathcal{A}u(s)-\mathcal{B}u(s),v\right\rangle-\frac{1}{2}\sum_{l=1}^2\langle \partial_l u(s),v\rangle^2,
\end{equation}
where $i^2=-1$. We recall that $\langle \cdot, \cdot\rangle$ denotes the duality between $[H^\kappa(\Theta)]^2$ and $[H^{-\kappa}(\Theta)]^2$. However, observe that
\begin{align*}
\left\langle \frac{1}{2}\mathcal{A}u(s),v\right\rangle &=\frac{1}{2}\left\langle \nabla u(s),\nabla v\right\rangle_0\\
\left\langle \mathcal{B}u(s),v\right\rangle &=\left\langle (u(s)\cdot \nabla)u(s),v \right\rangle_0=-\left\langle (u(s)\cdot \nabla)v,u(s) \right\rangle_0\\
\left\langle \partial_l u(s),v\right\rangle &=\left\langle \partial_l u(s),v\right\rangle_0.
\end{align*}

\smallskip

\begin{defn}
\label{def:martigale-problem}
We say a probability measure $\mathbb{P}$ on $\mathcal{X}$ is a solution of the martingale problem $\left(u_0,-\frac{1}{2}\,\mathcal{A}-\mathcal{B},\mathcal{E}\right)$, where $\mathcal{E}u\doteq \left(\begin{array}{cc}
                    \partial_1 u & \partial_2 u
                  \end{array}\right)$, if for each $v\in\mathcal{C}_0^\infty(\Theta)$,
$$
L_t^v\doteq\exp\left\{i\langle u(t),v\rangle\right\}-\int_0^t \exp\left\{i\langle u(s),v\rangle\right\}\varphi^v(u(s))\,ds\in\mathcal{M}_{loc}^c\left(\mathbb{D},\mathbb{P}^n\right),
$$
and $u(0)=u_0$, $\mathbb{P}$-a.s.
\end{defn}

\smallskip

For any $v\in\mathcal{C}_0^\infty(\Theta)$, applying It\^o's formula to the scalar semimartingale
\begin{align*}
\langle u_n(t),v\rangle=\langle u_n(t),v\rangle_0=\langle u_{0n},v\rangle+\int_0^t \left\langle -\frac{1}{2}\mathcal{A}u_n(s)-\mathcal{B}u_n(s),v\right \rangle\,ds\\
+\sum_{l=1}^2\int_0^t \left\langle   \partial_l u_n(s),v \right \rangle\,dB^l(s),
\end{align*}
and the function $f(x)=\exp\{ix\}$, we obtain
\smallskip
\begin{align*}
\exp\left\{i\langle u_n(t),v\rangle\right\}&=\exp\left\{i\langle u_{0n},v\rangle\right\}\\
&\quad+\int_0^t i\exp\left\{i\langle u_n(s),v\rangle\right\}\left\langle -\frac{1}{2}\mathcal{A}u_n(s)-\mathcal{B}u_n(s),v\right \rangle\,ds\\
&\quad+\sum_{l=1}^2\int_0^t  i\exp\left\{i\langle u_n(s),v\rangle\right\}\left\langle   \partial_l u_n(s),v \right \rangle\,dB^l(s)\\
&\quad+\frac{1}{2}\int_0^t i^2 \exp\left\{i\langle u_n(s),v\rangle\right\}\sum_{l=1}^2 \left\langle   \partial_l u_n(s),v \right \rangle^2\,ds\\
&=\int_0^t \exp\left\{i\langle u_n(s),v\rangle\right\}\varphi^v(u_n(s))\,ds\\
&\quad+\exp\left\{i\langle u_{0n},v\rangle\right\}+\sum_{l=1}^2\int_0^t  i\exp\left\{i\langle u_n(s),v\rangle\right\}\left\langle   \partial_l u_n(s),v \right \rangle\,dB^l(s).
\end{align*}
Thus, we have shown the following result
\begin{lem}
For each $n$, $\mathbb{P}^n$ is a measure on $\mathcal{X}$ such that for each test function $v$ belonging to $\mathcal{C}_0^\infty(\Theta)$,
\begin{align*}
\label{martigale-problem}
L_t^{n,v}&\doteq\exp\left\{i\langle u_n(t),v\rangle\right\}-\int_0^t \exp\left\{i\langle u_n(s),v\rangle\right\}\varphi^v(u_n(s))\,ds\\
&=\exp\left\{i\langle u_{0n},v\rangle\right\}+\sum_{l=1}^2\int_0^t  i\exp\left\{i\langle u_n(s),v\rangle\right\}\left\langle   \partial_l u_n(s),v \right \rangle\,dB^l(s),
\end{align*}
that is, $L_t^{n,v}\in \mathcal{M}_{loc}^c\left(\mathbb{D},\mathbb{P}^n\right)$. Therefore, $\mathbb{P}^n$ is a solution of the martingale problem $\left(u_{0n},-\frac{1}{2}\,\mathcal{A}-\mathcal{B}, \mathcal{E}\right)$.
\end{lem}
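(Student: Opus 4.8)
The statement is, in essence, the It\^o computation displayed in the lines immediately preceding it, so the plan is just to make its three ingredients explicit and add the (standard) transfer to the canonical space.

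\emph{Step 1: semimartingale representation of $\langle u_n(t),v\rangle$.} First I would record that for any $v\in\mathcal{C}_0^\infty(\Theta)$ the scalar process $\langle u_n(t),v\rangle=\langle u_n(t),v\rangle_0=\sum_{j=1}^{n} g_n^j(t)\langle e_j,v\rangle_0$ is a continuous semimartingale, since each $g_n^j$ solves the It\^o system (\ref{Euler:ito:sde}). Taking the corresponding linear combination of those equations, and using the identities $\langle\tfrac12\mathcal{A}u_n,v\rangle=\tfrac12\langle\nabla u_n,\nabla v\rangle_0$, $\langle\mathcal{B}u_n,v\rangle=-\langle(u_n\cdot\nabla)v,u_n\rangle_0$ and $\langle\partial_l u_n,v\rangle=\langle\partial_l u_n,v\rangle_0$ recorded just after Definition~\ref{def:martigale-problem} (so that all pairings make sense for $v$ not necessarily divergence free), one gets exactly $d\langle u_n(t),v\rangle=\langle-\tfrac12\mathcal{A}u_n(t)-\mathcal{B}u_n(t),v\rangle\,dt+\sum_{l}\langle\partial_l u_n(t),v\rangle\,dB^l(t)$ with $\langle u_n(0),v\rangle=\langle u_{0n},v\rangle$.

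\emph{Step 2: It\^o's formula for $f(x)=\exp\{ix\}$.} Then I would apply the complex It\^o formula to $f$ and the semimartingale of Step~1. Writing its drift as $\mu(s)=\langle-\tfrac12\mathcal{A}u_n(s)-\mathcal{B}u_n(s),v\rangle$ and its diffusion coefficients as $\sigma^l(s)=\langle\partial_l u_n(s),v\rangle$, the quadratic variation is $\sum_l(\sigma^l(s))^2\,ds$, so the $dt$-terms collect, using $i^2=-1$, into $\exp\{i\langle u_n(s),v\rangle\}\big(i\mu(s)-\tfrac12\sum_l(\sigma^l(s))^2\big)\,ds=\exp\{i\langle u_n(s),v\rangle\}\varphi^v(u_n(s))\,ds$, while the martingale part is $\sum_l\int_0^t i\exp\{i\langle u_n(s),v\rangle\}\langle\partial_l u_n(s),v\rangle\,dB^l(s)$. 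Rearranging yields precisely the displayed identity for $L_t^{n,v}$ (its first equality being the definition of $L_t^{n,v}$), which is the computation already written out before the statement.

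\emph{Step 3: local martingale property and passage to $\mathbb{P}^n$.} Next I would check that the right-hand side of that identity is a continuous local martingale. Each integrand $i\exp\{i\langle u_n(s),v\rangle\}\langle\partial_l u_n(s),v\rangle$ is continuous and adapted, and is bounded on $[0,T]$ because $|\langle\partial_l u_n(s),v\rangle|=|\langle u_n(s),\partial_l v\rangle_0|\le\|u_0\|_0\,\|\partial_l v\|_0$ by the a priori bound (\ref{est1}); hence each $\int_0^t i\exp\{\ldots\}\langle\partial_l u_n(s),v\rangle\,dB^l(s)$, and therefore the sum $M_t^{n,v}$, is a continuous $L^2$-bounded martingale for the Brownian filtration. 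Since $u_{0n}$ is deterministic, $L_t^{n,v}=\exp\{i\langle u_{0n},v\rangle\}+M_t^{n,v}$, and $\varphi^v(u_n(s))$ is a continuous deterministic function of the value $u_n(s)$ in the finite-dimensional space $H_n$, so $M_t^{n,v}$ is in fact $\sigma\{u_n(r):r\le t\}$-measurable; the tower property then upgrades its martingale property to this smaller filtration. Pushing forward under $\mathbb{P}^n=\mathbb{P}\circ u_n^{-1}$, the analogously defined process on $\mathcal{X}$ is a continuous $\mathcal{D}_t$-martingale, hence, by right-continuity of paths, a $\mathbb{D}$-martingale; that is, $L_t^{n,v}\in\mathcal{M}_{loc}^c(\mathbb{D},\mathbb{P}^n)$. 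Together with $u_n(0)=u_{0n}$ (so $u(0)=u_{0n}$ $\mathbb{P}^n$-a.s.), and since $v\in\mathcal{C}_0^\infty(\Theta)$ was arbitrary, Definition~\ref{def:martigale-problem} holds and $\mathbb{P}^n$ solves the martingale problem $\big(u_{0n},-\tfrac12\mathcal{A}-\mathcal{B},\mathcal{E}\big)$. The only genuinely non-mechanical point is this last transfer from the Brownian filtration to the filtration generated by $u_n$ and then to the canonical $\mathbb{D}$ under $\mathbb{P}^n$; this is exactly the bookkeeping performed in \cite{MiRo05}, which I would invoke. Everything else is a direct application of It\^o's formula and Lemma~\ref{apriori-estimates}.
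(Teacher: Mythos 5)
Your proposal is correct and follows essentially the same route as the paper: the paper's proof \emph{is} the It\^o computation for $f(x)=\exp\{ix\}$ applied to the scalar semimartingale $\langle u_n(t),v\rangle$, displayed immediately before the lemma. Your Steps 1--2 reproduce that computation, and Step 3 merely makes explicit the bookkeeping (boundedness of the integrands via (\ref{est1}) and the pushforward to the canonical space) that the paper leaves implicit.
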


\smallskip

To prove Theorem \ref{Main-result} we need the following result
\begin{thm}
\label{aux-Main-result} For each $u_0 \in V$ there exists a measure $\mathbb{P}$ on $\mathcal{X}$ solving the martingale problem $\left(u_0,-\frac{1}{2}\,\mathcal{A}-\mathcal{B}, \mathcal{E}\right)$ such that
\begin{equation}
\label{sol-estimates1}
\underset{0\leq t\leq T}{\sup}\hbox {ess sup}_{\Omega}\|u(t)\|_0^2< \infty \text{ and}\underset{0\leq r\leq T}{\sup}\mathbb{P}\left\{\|u(r)\|_1^2\right\}<\infty.
\end{equation}
In addition, $\mathbb{P}-a.s.$
\begin{equation}
\label{sol-estimates2}
\int_0^T\left\|\frac{1}{2}\,\mathcal{A}u(s)+\mathcal{B}u(s)\right\|_{-1}^2\,ds<\infty.
\end{equation}
\end{thm}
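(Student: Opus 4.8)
The plan is to obtain $\mathbb{P}$ as a weak limit of the approximating measures $\mathbb{P}^n$ and then pass to the limit in the martingale-problem identity. By Corollary~\ref{cor:compact-rel} the family $\{\mathbb{P}^n\}$ is relatively weakly compact on $(\mathcal{X},\mathcal{T})$, so along a subsequence $\mathbb{P}^n \Rightarrow \mathbb{P}$; by the Skorokhod representation theorem we may realise $u_n$ and a limit $u$ on a common probability space with $u_n \to u$ in $\mathcal{X}$, $\mathbb{P}$-a.s. First I would verify that the a priori estimates of Lemma~\ref{apriori-estimates} survive the limit: since $\|u_n(t)\|_0^2 \le \|u_0\|_0^2$ a.s.\ uniformly in $t$ and $n$, and $\mathbb{E}\|u_n(t)\|_1^2 \le \|u_0\|_1^2$, lower semicontinuity of the respective norms under the topologies $\mathcal{T}_2$ and $\mathcal{T}_3$ (weak-$L^2$ and weak-$L^2(0,T;H^1)$) gives \eqref{sol-estimates1}, namely $\operatorname*{ess\,sup}_\Omega \|u(t)\|_0^2 \le \|u_0\|_0^2$ for all $t$ and $\sup_r \mathbb{E}\|u(r)\|_1^2 \le \|u_0\|_1^2$. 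The bound \eqref{sol-estimates2} then follows from the operator estimates in Lemma~\ref{operator-estimates}: $\|\tfrac12\mathcal{A}u(s)+\mathcal{B}u(s)\|_{-1} \le C(\|u(s)\|_1 + \|u(s)\|_0^2)$ and the integrability of $\|u(s)\|_1^2$ in $s$ (from \eqref{est2} via Fubini, preserved in the limit) together with the uniform $L^2$ bound control the time integral $\mathbb{P}$-a.s.

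Next I would show that $\mathbb{P}$ solves the martingale problem $(u_0,-\tfrac12\mathcal{A}-\mathcal{B},\mathcal{E})$. Fix $v \in \mathcal{C}_0^\infty(\Theta)$. For the approximations we already know $L_t^{n,v} \in \mathcal{M}^c_{loc}(\mathbb{D},\mathbb{P}^n)$; in fact, because $\langle \partial_l u_n(s),v\rangle_0 = -\langle u_n(s),\partial_l v\rangle_0$ is bounded by $\|u_0\|_0\|\partial_l v\|_0$ and the exponential factor has modulus one, the stochastic integral term is a genuine (bounded-in-$L^2$) martingale, so $L_t^{n,v}$ is in fact a true martingale with a uniform bound. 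To pass to the limit, for bounded $\mathcal{D}_s$-measurable continuous $g$ on $\mathcal{X}$ one wants $\mathbb{E}^{\mathbb{P}}[(L_t^v - L_s^v)g] = 0$; this follows from $\mathbb{E}^{\mathbb{P}^n}[(L_t^{n,v}-L_s^{n,v})g]=0$ provided the map $w \mapsto L_t^v(w)g(w)$ is continuous $\mathbb{P}$-a.s.\ on $\mathcal{X}$ and uniformly integrable along the sequence. The exponential and the $g$ factor are continuous in the $\mathcal{U}'$/weak-$L^2$ topology, and uniform integrability is immediate from the uniform bounds. The genuinely delicate point is the continuity of $w \mapsto \int_0^t \exp\{i\langle w(s),v\rangle\}\varphi^v(w(s))\,ds$, i.e.\ of the drift functional; here $\varphi^v$ contains the nonlinear term $\langle\mathcal{B}u(s),v\rangle = -\langle(u(s)\cdot\nabla)v,u(s)\rangle_0$, which is continuous only in a topology strong enough to pass to the limit in a quadratic expression in $u$.

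This is where the $\mathcal{X}_4 = L^2(0,T;[L^2(\Theta)]^2)$ component with its strong topology $\mathcal{T}_4$ enters and is the main obstacle. Strong $L^2$-in-space-time convergence $u_n \to u$ — which is part of $\mathcal{T}$-convergence, delivered by the compactness Lemma~\ref{compact-rel} (the combination of the uniform $H^1$ bound (ii) and the equicontinuity in $\mathcal{U}'$ (iii) yields compactness in $L^2(0,T;L^2)$ by an Aubin--Lions type argument) — is exactly what makes the quadratic functional $u \mapsto \int_0^t \langle (u(s)\cdot\nabla)v, u(s)\rangle_0\,ds$ continuous: writing the difference as $\int_0^t \langle((u_n-u)(s)\cdot\nabla)v,u_n(s)\rangle_0\,ds + \int_0^t \langle(u(s)\cdot\nabla)v,(u_n-u)(s)\rangle_0\,ds$ and bounding each by $\|\nabla v\|_\infty$ times $\|u_n-u\|_{L^2(0,T;L^2)}(\|u_n\|_{L^2(0,T;L^2)}+\|u\|_{L^2(0,T;L^2)})$ shows it tends to $0$. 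The linear terms in $\varphi^v$ (the $\mathcal{A}$-term and the $\langle\partial_l u,v\rangle$ terms, after integration by parts onto $v$) are continuous already in the weaker topologies. Combining these, $L_t^v \in \mathcal{M}^c_{loc}(\mathbb{D},\mathbb{P})$ for every such $v$; finally, $u(0)=u_0$ $\mathbb{P}$-a.s.\ follows because $u_{0n} \to u_0$ in $H$ and the evaluation $w \mapsto w(0)$ is $\mathcal{T}_1$-continuous. This establishes all the assertions of the theorem. I expect the continuity of the nonlinear drift functional — and, upstream of it, making the Aubin--Lions/Skorokhod machinery yield the strong $L^2(0,T;L^2)$ convergence rather than merely weak convergence — to be the crux; everything else is bookkeeping with the uniform bounds.
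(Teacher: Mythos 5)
Your proposal is correct and follows essentially the same route as the paper's (itself a sketch of Theorem 2.10 in \cite{MiRo05}): extract a weakly convergent subsequence via Corollary~\ref{cor:compact-rel}, pass to the limit in the martingale identity using the strong $L^2(0,T;[L^2]^2)$ component of $\mathcal{T}$ to handle the quadratic term $\langle\mathcal{B}u,v\rangle$, and obtain \eqref{sol-estimates1}--\eqref{sol-estimates2} from Lemmas~\ref{apriori-estimates} and \ref{operator-estimates} by lower semicontinuity. Your use of Skorokhod representation plus uniform integrability is just an alternative packaging of the paper's continuous-convergence argument ($L_t^{n,v}(\omega_n)\to L_t^v(\omega)$ uniformly on compacts), and you correctly identify the continuity of the nonlinear drift functional as the crux.
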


\smallskip

Notice that in (\ref{sol-estimates1}) we make a slight abuse of notation as in \cite{MiRo05}, that is, we write $\mathbb{P}\left\{F\right\}$ for an integral of a measurable function $F$ with respect to the measure $\mathbb{P}$.

\smallskip

\begin{proof}
The proof follows from the arguments used in the proof of Theorem 2.10 in \cite{MiRo05}. For sake of completeness we shall sketch some of them. Owing to Corollary \ref{cor:compact-rel}, we can suppose that a sequence of measures $\{\mathbf{P}^n\}$ converges weakly to some measure $\mathbf{P}$. Let $\omega_n\rightarrow \omega$ in $\mathcal{X}$. By Lemma \ref{compact-rel}, the sequence $\{\omega_n(t)\}$ is weakly relatively compact in $L^2\left([0,T]; \left[H^1(\Theta)\right]^2\right)$. Thus, using the weakly relatively compactness of $\{\omega_n(t)\}$ and Lemma \ref{operator-estimates} it is possible to prove that the sequence $\{L_t^{n,v}(\omega_n)\}$ is equicontinuous in $t$ with respect to $n$. Next, one shows that
$$
\sup_{s\leq T}\left|L_t^{n,v}(\omega_n)-L_t^v(\omega)\right|\overset{n\rightarrow \infty}{\longrightarrow} 0.
$$
Thus for each compact set $K\subseteq \mathcal{X}$,
$$
\sup_{s\leq T,\quad \omega\in K}\left|L_t^{n,v}(\omega_n)-L_t^v(\omega)\right|\overset{n\rightarrow \infty}{\longrightarrow} 0.
$$
Then the probability $\mathbf{P}$, as the limit of the sequence $\{\mathbf{P}^n\}$,is shown to be a solution of the martingale problem $\left(u_0,-\frac{1}{2}\,\mathcal{A}-\mathcal{B},\mathcal{E}\right)$ and  the estimates \eqref{sol-estimates1} and \eqref{sol-estimates2} can be checked.
\end{proof}

\smallskip

Finally, we shall give the proof of our main result:

\begin{proof}[Proof of Theorem \ref{Main-result}]
Using Theorem \ref{aux-Main-result}, the proof can be completed by borrowing the arguments of that of Theorem 2.1 in \cite{MiRo05}. Again, for sake of completeness we shall give the details. Owing to Theorem \ref{aux-Main-result}, there exists a measure $\mathbf{P}$ on $\mathcal{X}$ such that \eqref{sol-estimates1} holds and $\mathbf{P}$-a.s. for each $v\in \mathcal{C}_0^\infty$,
\begin{equation*}
\left\langle u(t),v\right\rangle_0=\left\langle u_0,v\right\rangle_0+\int_0^t\left\langle \frac{1}{2}\mathcal{A}u(s)+\mathcal{B}u(s),v\right\rangle\,ds+M^v(t),
\end{equation*}
where $M_t^v\in \mathcal{M}_{loc}(\mathbb{D},\mathbf{P})$ and
\begin{equation*}
\|M^v(t)\|_0^2-\sum_{l=1}^2\int_0^t \left\langle \partial_l u(s),v\right\rangle^2\,ds\in \mathcal{M}_{loc}(\mathbb{D},\mathbf{P}).
\end{equation*}
Since
\begin{equation*}
\mathbf{P}\left\{\sum_{l=1}^2\int_0^t \left\langle \partial_l u(s),v\right\rangle_0^2\,ds\right\}<\infty,
\end{equation*}
there is an $[L^2(\Theta)]^2$-valued continuous martingale $M(t)$ such that $\mathbf{P}$-a.s. $\left\langle M(t),v\right \rangle=M^v(t) $ for all $t$. In fact, taking an $[L^2(\Theta)]^2$-basis $\{v_k\}$, we define $M(t):=\sum_{k=1}^\infty M^{v_k}(t)\,v_k$.

Therefore, $\mathbf{P}$-a.s.,
\begin{equation}
\left\{\renewcommand{\arraystretch}{1.8}
\begin{array}{l}
d u(t)=-\left\{\frac{1}{2}\mathcal{A}u(t)+\mathcal{B}u(t)\right\}\,dt+dM(t),\\
u(0)=u_0.
\end{array}
\right.
\end{equation}
According to Lemma 3.2 in \cite{MiRo99}, there exists a cylindrical Wiener process $B$ in $\mathbb{R}^2$ (possibly in some extension of the probability space $(\Omega, \mathcal{D}_T,\mathbf{P})$) such that
\begin{equation*}
M(t)=  \sum_{l=1}^2\int_0^t \partial_l u(s)\,dB^l(s).
\end{equation*}

This completes the proof of our main result. 

\end{proof}

\smallskip

\section{2D stochastic Euler equations on the torus}

In Equation (\ref{Euler:str:eq}) we replace the $2$-dimensional Brownian motion by the following Wiener process on $\Theta=[0,2\pi]^2$ with free divergence and periodic boundary conditions:
\begin{equation}
\label{brownian torus}
W(t,\theta)=\frac{1}{\sqrt{c_W}}\sum_{k\in \mathbb{Z}^2}q_k^{1/2}\left[\Cos_k(\theta)\,B_k^1(t)+\Sin_k(\theta)\,B_k^2(t)\right],
\end{equation}
where
\begin{align*}
&q_{(0,0)}=1,\quad q_k=\frac{1}{|k|^{2(\beta-1)}} ~\hbox{for}~ k\in \mathbb{Z}^2 \setminus\{(0,0)\},\\
&c_W=1+\sum_{k\in \mathbb{Z}^2\setminus \{(0,0)\}}\frac{(k^1)^2}{|k|^{2\beta}}=1+\sum_{k\in \mathbb{Z}^2\setminus \{(0,0)\}}\frac{(k^2)^2}{|k|^{2\beta}},\quad |k|^2=(k^1)^2+(k^2)^2,\quad \beta>3\\
& \Cos_{(0,0)} =(1,0),\qquad \Sin_{(0,0)}=(0,1),\\
&\Cos_k(\theta)=\frac{1}{|k|}(k^2,-k^1)\cos(k\cdot \theta),\quad \Sin_k(\theta)=\frac{1}{|k|}(k^2,-k^1)\sin(k\cdot \theta) ~\hbox{for}~ k\in \mathbb{Z}^2 \setminus\{(0,0)\},\\
&k\cdot \theta=k^1\theta^1+k^2\theta^2\\
&B_k=(B_k^1,B_k^2)\text{ is a sequence of independent $2$-dimensional standard Brownian motions}.
\end{align*}
(See \cite{CC07} for details.) Notice that $\{\Cos_k,\,\Sin_k:\,k\in \mathbb{Z}^2\}\}$ is a complete system of vectors for $\left[L^2(\Theta)\right]^2$ of divergence free and with periodic boundary conditions, which are the eigenvectors of the operator $-\Delta$ with eigenvalues $\{|k|^2:\,k \in \mathbb{Z}^2\}$, respectively.

\smallskip
This stochastic process is a centered Gaussian process on the Sobolev space $\left[H^\alpha(\Theta)\right]^2$, $0<\alpha<\beta-2$, with covariance function
\begin{equation*}
\mathbb{E}\left(W(t)\otimes W(t)\right)=t\, Q,
\end{equation*}
where $Q$, defined by the eigenvalues $\left\{q_k\right\}_{k\in\mathbb{Z}^2}$ in $\{\Cos_k,\,\Sin_k:\,k\in \mathbb{Z}^2\}$, is of trace class. In addition, we can define
\begin{align*}
\partial_i W(t,\theta)&=\frac{1}{\sqrt{c_W}}\sum_{k\in \mathbb{Z}^2\setminus \{(0,0)}q_{k}^{1/2}k^i\left[-\Sin_k(\theta)\,B_k^1(t)+\Cos_k(\theta)\,B_k^2(t)\right],
\end{align*}
for $1\leq i \leq 2$.

\smallskip

To be more precise, we want to study the following stochastic Euler equation in dimension 2:
\begin{equation}
\label{Euler:str:periodic:eq}
\left\{\renewcommand{\arraystretch}{1.8}
\begin{array}{ll}
d u(t,\theta)=-(u(t,\theta) \cdot\nabla )u(t,\theta)\,dt+ \sum_{l=1}^2\partial_l u(t,\theta) \circ dW^l(t,\theta) &\text{ in } ]0,T[\times \Theta,   \\
{\rm div}\, u(t,\theta)=0 &\text{ in }]0,T[\times \Theta, \\
u(t,(\theta^1,0))=u(t,(\theta^1,2\pi)), \\ u(t,(0,\theta^2))=u(t,(2\pi,\theta^2)),\text{ for } \theta^1,\theta^2\in [0,2\pi],\,t\in ]0,T[,\\
u(\theta,0)=u_0(\theta) &\text{ in } \Theta,
\end{array}
\right.
\end{equation}
in the Stratonovich formulation, or
\begin{equation}
\label{Euler:ito:periodic:eq}
\left\{\renewcommand{\arraystretch}{1.8}
\begin{array}{ll}
d u(t,\theta)=\left\{\frac{1}{2}\,\Delta u(t,\theta)-(u(t,\theta) \cdot\nabla )u(t,\theta)\right\}\,dt\\\hspace{6cm}+ \sum_{l=1}^2\partial_l u(t,\theta)\,  dW^l(t,\theta) &\text{ in }\Theta\times]0,T[,    \\
{\rm div}\, u(t,\theta)=0 &\text{ in }]0,T[\times \Theta, \\
u(t,(\theta^1,0))=u(t,(\theta^1,2\pi)), \\ u(t,(0,\theta^2))=u(t,(2\pi,\theta^2)),\text{ for } \theta^1,\theta^2\in [0,2\pi],\,t\in ]0,T[,\\
u(\theta,0)=u_0(\theta) &\text{ in } \Theta,
\end{array}
\right.
\end{equation}
in the It\^o formulation, respectively, where we also assume that the initial condition $u_0(\theta)$ is also a periodic function. Indeed, using the expressions of $\partial_i W^j(t,\theta)$'s, $1\leq i,j\leq 2$, we obtain that (cf. \cite{CC07})
\begin{equation*}
 \sum_{l=1}^2\partial_l u(t,\theta) \circ dW^l(t,\theta)= \sum_{l=1}^2\partial_l u(t,\theta)\,dW^l(t,\theta)+\frac{1}{2}\Delta u(t,\theta).
\end{equation*}
The stochastic term of $\partial_l u(t,\theta)$ is
\begin{equation*}
 \sum_{j=1}^2\partial^2_{jl}u(t,\theta)\,dW^j(t,\theta)+ \sum_{j=1}^2\partial_j u(t,\theta)\,d\partial_lW^j(t,\theta),
\end{equation*}
and the respective joint quadratic variations give
\begin{align*}
&\left\langle W^j(t,\theta),W^l(t,\theta)\right \rangle=\delta_{jl},
&\left\langle \partial_lW^j(t,\theta),W^l(t,\theta)\right \rangle=0.
\end{align*}

\smallskip

The basic spaces $H$ and $V$ are defined as
\begin{align*}
H&=\Big\{v\in \left[L^2\left(\Theta\right)\right]^2: \quad {\rm div}\,v=0;\quad v(\theta^1,0)=v(\theta^1,2\pi ), \\
&\hspace{4cm}v(0,\theta^2)=v(2\pi,\theta^2)~\hbox{for}~ \theta^1,\theta^2\in [0,2\pi]\Big\},\\
V&=\Big\{v\in \left[H^1\left(\Theta\right)\right]^2: \quad {\rm div}\,v=0;\quad v(\theta^1,0)=v(\theta^1,2\pi ), \\
&\hspace{4cm}v(0,\theta^2)=v(2\pi,\theta^2)~\hbox{for}~ \theta^1,\theta^2\in [0,2\pi]\Big\},
\end{align*}
respectively.

\smallskip

In the space $H$ consider the Stokes operator $\mathcal{A}: D(\mathcal{A})\subset H\rightarrow H$, defined as $\mathcal{A} v=-P_H \Delta v$, for all $v\in D(\mathcal{A})$, where $P_H$ is the Leray projector. Denoting by
$\left\langle \cdot,\cdot\right \rangle $ the inner product in $H$, the operator $\mathcal{A}$ is defined as
\begin{equation}
\label{A:def:per}
\left \langle \mathcal{A}u,v\right \rangle=\int_\Theta \nabla u\cdot \nabla v,
\end{equation}
for all $u,v\in V$.

\smallskip

We also define $\mathcal{B}:V\rightarrow V^\prime$ as $\mathcal{B} u=(u \cdot\nabla )u$, that is,
\begin{equation}
\label{B:def:per}
\left \langle \mathcal{B}u,v\right \rangle=\int_\Theta (u \cdot\nabla )u\cdot  v,
\end{equation}
for all $u,v\in V$. Here $V^\prime$ denotes the topological dual of $V$.

\smallskip

In terms of $\mathcal{A}$, $\mathcal{B}$ we can write Equation (\ref{Euler:ito:periodic:eq}) as the following stochastic evolution equation in $V'$:
\begin{equation}
\label{NS:eqevol:per}
\left\{\renewcommand{\arraystretch}{1.8}
\begin{array}{ll}
d u(t)=-\left\{\frac{1}{2}\,\mathcal{A} u(t)
+\mathcal{B}u(t)\right\}\,dt+ \sum_{l=1}^2\partial_l u(t)\,  dW^l(t) , \\
u(0)=u_0.
\end{array}
\right.
\end{equation}

\smallskip

The formulation of Equation (\ref{NS:eqevol:per}) is equivalent to the following weak or variational form:
\begin{equation}
\label{NS:eqweak:per}
\left\{\renewcommand{\arraystretch}{1.8}
\begin{array}{ll}
d\left\langle u(t),v\right\rangle= -\left\langle \frac{1}{2}\,\mathcal{A} u(t)
+\mathcal{B}u(t),v\right\rangle\,dt+ \sum_{l=1}^2\left\langle \partial_l u(t)\,  dW^l(t) ,v\right \rangle &\text{ in }]0,T[,    \\
\left \langle u^\nu(0),v\right\rangle=\left\langle u_0,v\right\rangle, &
\end{array}
\right.
\end{equation}
for all $v\in V$.

\smallskip

Following analogous arguments as those in Section \ref{section:approximations}, we can define the corresponding Faedo-Galerkin approximations of Equation \eqref{NS:eqweak:per}. Let
$$
H_n:={\rm span}\, \left\{\Cos_k,\Sin_k:\, k\in I_n^2\right\},
$$
where $I_n^2\doteq \left\{-n,\ldots,-1,0,1,\ldots,n\right\}^2$, and define $$
u_n(t)\doteq \sum_{k\in I_n^2}\left[u_n^{1k}(t)\,\Cos_k+u_n^{2k}(t)\,\Sin_k\right],
$$
where $u_n^{1k}\doteq \langle u_n^\nu(t),\Cos_k\rangle$, $u_n^{2k}\doteq \langle u_n^\nu(t),\Sin_k\rangle$,  as the solution of the following stochastic differential equation: For each $v\in H_n$
\begin{align}
\label{NS:sde:per}
d\left\langle u_n(t),v\right\rangle= -\left\langle \frac{1}{2}\,\mathcal{A} u_n(t)
+\mathcal{B}u_n(t),v\right\rangle\,dt+ \sum_{l=1}^2\left\langle \partial_l u_n(t)\,  dW_n^l(t) ,v\right \rangle ,
\end{align}
where
$$
W_n(t,\theta)=\frac{1}{\sqrt{c_W}}\sum_{k\in I_n^2}q_k^{1/2}\left[\Cos_k\,B_k^1(t)+\Sin_k\,B_k^2(t)\right],
$$
with $u_{0n}=\sum_{k\in I_n^2} \left[\langle u_0,\Cos_k\rangle \Cos_k+\langle u_0,\Sin_k\rangle \Sin_k\right]$ as initial condition.

\smallskip

Let us consider a priori estimates for $u_n$ independent on $n$.

\begin{lem}
\label{apriori-estimates:periodic}
Let $u_0\in V$. Then for each $T>0$
\begin{equation}
\label{est1:periodic}
\sup_{n}\,\sup_{0\leq r\leq T}\left\|u_n(t)\right\|_0^2\leq \|u_0\|_0^2
\end{equation}
with probability $1$, and
\begin{equation}
\label{est2:periodic}
\sup_{n}\,\mathbb{E}\left\|u_n(t)\right\|_1^2\leq \|u_0\|_1^2 \, e^{Ct},
\end{equation}
for any $t\in[0,T]$, where $C>0$ is a constant not depending on $n$.
\end{lem}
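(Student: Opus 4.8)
The plan is to mimic the proof of Lemma~\ref{apriori-estimates} exactly, with the only new feature being that the noise $W_n$ is spatially inhomogeneous, so the It\^o correction terms no longer reduce to a pure boundary integral (which vanishes by periodicity) but instead produce a bounded zero-order contribution in the $H^1$ estimate. First I would apply It\^o's formula to $\|u_n(t)\|_0^2$ in the evolution formulation~\eqref{NS:sde:per}. Using the periodicity of $u_n$ together with $\mathrm{div}\,u_n=0$, the nonlinear term $\langle(u_n\cdot\nabla)u_n,u_n\rangle_0$ vanishes, the Stokes term $\langle\mathcal{A}u_n,u_n\rangle$ exactly cancels the quadratic-variation term $\sum_{j,l}\|\cdot\|^2$ coming from the second derivatives of $u_n$ hitting the noise (this is the mechanism that turned~\eqref{Euler:ito:eq} into a Navier--Stokes equation), and the remaining martingale term integrates a divergence of a periodic function over the torus, hence vanishes. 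This gives~\eqref{est1:periodic} pathwise, with constant exactly $\|u_{0n}\|_0^2\le\|u_0\|_0^2$, just as in the Dirichlet case.

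For~\eqref{est2:periodic} I would apply It\^o's formula to $\|\nabla u_n(t)\|_0^2=\|u_n(t)\|_1^2$. The nonlinear term again drops by the identity $\langle\nabla[(u_n\cdot\nabla)u_n],\nabla u_n\rangle_0=0$ valid in the periodic setting. The key difference from Lemma~\ref{apriori-estimates} is that the quadratic variation of the martingale part of $\partial_l u_n$ now has two pieces: $\sum_j\partial_{jl}^2 u_n\,dW^j$ coming from differentiating $u_n$, \emph{and} $\sum_j\partial_j u_n\,d\partial_l W^j$ coming from the spatial dependence of the noise. The first piece, after taking $\langle\cdot,\nabla u_n\rangle_0$, combines with the Stratonovich-to-It\^o correction $\tfrac12\Delta u_n$ and with the Stokes term so that the top-order ($H^2$) contributions cancel as before; the leftover martingale term $\langle\partial_l u_n,\Delta u_n\rangle_0$ has zero expectation. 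The second piece — involving $d\partial_l W^j$ and the cross-variation $\langle\partial_l W^j,W^l\rangle$, which the excerpt has already recorded to be $0$ at leading cross-order but whose own quadratic variation $\langle\partial_l W^j,\partial_l W^j\rangle = (\text{sum of }q_k|k|^2/c_W)\,dt$ is finite precisely because $\beta>3$ — contributes a zero-order term bounded by $C\|\nabla u_n(s)\|_0^2\,ds = C\|u_n(s)\|_1^2\,ds$. Taking expectations, the martingale terms vanish and we are left with
\begin{equation*}
\mathbb{E}\|u_n(t)\|_1^2 \le \|u_{0n}\|_1^2 + C\int_0^t \mathbb{E}\|u_n(s)\|_1^2\,ds,
\end{equation*}
and Gr\"onwall's lemma yields $\mathbb{E}\|u_n(t)\|_1^2\le\|u_0\|_1^2 e^{Ct}$ with $C$ independent of $n$ (the constant depends only on $\sum_k q_k|k|^2/c_W$).

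The main obstacle is the careful bookkeeping of the It\^o corrections for the inhomogeneous noise: one must correctly expand $d(\partial_l u_n\,dW^l)$ using both the derivative of $u_n$ and the derivative of $W^l$, identify which second-order terms cancel against $\mathcal{A}u_n$ and the Stratonovich correction, and verify that the non-cancelling contribution is genuinely of zero order in $u_n$ (bounded by $\|u_n\|_1^2$, not $\|u_n\|_2^2$). This is where the hypothesis $\beta>3$ enters, ensuring $\sum_{k\ne 0}q_k|k|^2 = \sum_{k\ne 0}|k|^{2}/|k|^{2(\beta-1)}<\infty$ so that $\partial_i W$ is a well-defined $L^2$-valued process with finite-trace covariance and the constant $C$ is finite; I would also note that since $u_n$ takes values in the finite-dimensional smooth space $H_n$, all the formal manipulations (differentiation under the integral, the vanishing of boundary-free divergences on the torus) are rigorously justified, exactly as the strong solvability of~\eqref{NS:sde:per} was argued in Section~\ref{section:approximations}.
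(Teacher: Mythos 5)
Your proposal follows essentially the same route as the paper: pathwise cancellation of the It\^o correction against the Stokes dissipation for the $L^2$ bound (with the martingale term vanishing as the integral of a divergence over the torus), and for the $H^1$ bound the cancellation of the second-order contributions plus a leftover zero-order term of size $c'_W/c_W\,\|\nabla u_n\|_0^2$ coming from the spatial derivatives of the noise, handled by taking expectations and applying Gr\"onwall. The convergence condition you identify ($\beta>3$ making $\sum_{k\neq 0} q_k|k|^2$ finite) is exactly the source of the paper's constant $C=c'_W/c_W$, so the argument is correct and matches the paper's proof.
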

\begin{proof}
Firstly, notice that we can write the process $u_n$ in the following way:
\begin{align*}
u_n(t)&=u_{0n}-\int_0^t\left\{\frac{1}{2}\,\mathcal{A} u_n(s)+\mathcal{B}u_n(s)\right\}\,ds\\&\quad+\frac{1}{\sqrt{c_W}}\sum_{k\in I_n^2}q_k^{1/2}\int_0^t\left[(\Cos_k\cdot \nabla)u_n(s)\,dB_k^1(s)+(\Sin_k\cdot \nabla)u_n(s)\,dB_k^2(s)\right],
\end{align*}
where
\begin{equation*}
(\Cos_{(0,0)}\cdot \nabla)u_n(s)=\partial_1 u_n(s), \quad (\Sin_{(0,0)}\cdot \nabla)u_n(s)=\partial_2 u_n(s),
\end{equation*}
and
\begin{align*}
(\Cos_k\cdot \nabla)u_n(s)&=\frac{k^2}{|k|}\cos(k\cdot \ast)\partial_1 u_n(s)-\frac{k^1}{|k|}\cos(k\cdot \ast)\partial_2 u_n(s),\\
(\Sin_k\cdot \nabla)u_n(s)&=\frac{k^2}{|k|}\sin(k\cdot \ast)\partial_1 u_n(s)-\frac{k^1}{|k|}\sin(k\cdot \ast)\partial_2 u_n(s),
\end{align*}
for $k\neq (0,0)$.

\smallskip

On the other hand,
\begin{align*}
\nabla u_n(t)&=\nabla u_{0n}-\int_0^t\left\{\frac{1}{2}\,\nabla\left[\mathcal{A} u_n(s)\right]+\nabla\left[\mathcal{B}u_n(s)\right]\right\}\,ds\\&\quad+\frac{1}{\sqrt{c_W}}\sum_{k\in I_n^2}q_k^{1/2}\int_0^t\left[\nabla\left[(\Cos_k\cdot \nabla)u_n(s)\right]\,dB_k^1(s)+\nabla\left[(\Sin_k\cdot \nabla)u_n(s)\right]\,dB_k^2(s)\right],
\end{align*}
where
\begin{align*}
\nabla\left[(\Cos_k\cdot \nabla)u_n(s)\right]&=\left(\partial_1\partial_1 u_n(s),\partial_2\partial_1 u_n(s)\right),\\
\nabla\left[(\Sin_k\cdot \nabla)u_n(s)\right]&=\left(\partial_1\partial_2 u_n(s),\partial_2\partial_2 u_n(s)\right),
\end{align*}
and
\begin{align*}
\nabla\left[(\Cos_k\cdot \nabla)u_n(s)\right]&=-\frac{\left(k^2k^1,(k^2)^2 \right)}{|k|}\sin(k\cdot \ast)\partial_1 u_n(s)+\frac{k^2}{|k|} \cos(k\cdot \ast)\left(\partial_1\partial_1 u_n(s),\partial_2\partial_1 u_n(s)\right)\\
&\quad+\frac{\left((k^1)^2,k^1k^2 \right)}{|k|}\sin(k\cdot \ast)\partial_2 u_n(s)-\frac{k^1}{|k|} \cos(k\cdot \ast)\left(\partial_1\partial_2 u_n(s),\partial_2\partial_2 u_n(s)\right),\\
\nabla\left[(\Sin_k\cdot \nabla)u(s)\right]&=\frac{\left(k^2k^1,(k^2)^2 \right)}{|k|}\cos(k\cdot \ast)\partial_1 u_n(s)+\frac{k^2}{|k|} \sin(k\cdot \ast)\left(\partial_1\partial_1 u_n(s),\partial_2\partial_1 u_n(s)\right)\\
&\quad-\frac{\left((k^1)^2,k^1k^2 \right)}{|k|}\cos(k\cdot \ast)\partial_2 u_n(s)-\frac{k^1}{|k|} \sin(k\cdot \ast)\left(\partial_1\partial_2 u_n(s),\partial_2\partial_2 u_n(s)\right),
\end{align*}
for $k\neq (0,0)$.

\smallskip

Then, using similar arguments as those in the proof of Lemma \ref{apriori-estimates}, in particular using It\^o's formula, we obtain
\begin{align*}
\|u_n(t)\|_0^2&=\|u_{0n}\|_0^2-\int_0^t\left\langle u_n(s),u_n(s)\right\rangle_1\,ds-2\int_0^t\left\langle (u_n(s)\cdot \nabla)u_n(s),u_n(s)\right\rangle_0\,ds\\
&\quad+\frac{2 }{\sqrt{c_W}}\sum_{k\in I_n^2}q_k^{1/2}\int_0^t\left[\left\langle(\Cos_k\cdot \nabla)u_n(s),u_n(s)\right\rangle_0\,dB_k^1(s)\right.\\
&\hspace{7cm}\left.+\left\langle (\Sin_k\cdot \nabla)u_n(s),u_n(s)\right\rangle_0\,dB_k^2(s)\right]\\
&\quad+\frac{1}{c_W}\left(1+\sum_{k\in I_n^2\setminus\{(0,0)\}}\frac{(k^1)^2}{|k|^{2\beta}}\right)\int_0^t \|u_n(s)\|_1^2\,ds\\
&\leq \|u_{0}\|_0^2-\int_0^t \|u_n(s)\|_1^2\,ds+\int_0^t \|u_n(s)\|_1^2\,ds= \|u_0\|_1^2,
\end{align*}
for a.e. $\omega\in \Omega$, and
\begin{align*}
\|\nabla u_n(t)\|_0^2&=\|\nabla u_{0n}\|_0^2-\int_0^t \sum_{j,l=1}^2\|\partial_j\partial_l u_n(s)\|_0^2\,ds\\
&\quad-2\int_0^t\left\langle \nabla[(u_n(s)\cdot\nabla) u_n(s)],\nabla u_n(s)\right\rangle_0\,ds\\
&\quad-\frac{2 }{\sqrt{c_W}}\sum_{k\in I_n^2}\frac{1}{|k|^{\beta-1}}\int_0^t\left[\left\langle(\Cos_k\cdot \nabla)u_n(s),\Delta u_n(s)\right\rangle_0\,dB_k^1(s)\right.\\
&\hspace{7cm}\left.+\left\langle (\Sin_k\cdot \nabla)u_n(s),\Delta u_n(s)\right\rangle_0\,dB_k^2(s)\right]\\
&\quad+\frac{1}{c_W}\left(1+\sum_{k\in I_n^2\setminus\{(0,0)\}}\frac{(k^1)^2}{|k|^{2\beta}}\right)\int_0^t \sum_{j,l=1}^2\|\partial_j\partial_l u_n(s)\|_0^2\,ds\\
&\quad+ \frac{1}{c_W}\left(\sum_{k\in I_n^2\setminus\{(0,0)\}}\frac{(k^1)^2}{|k|^{2\beta-2}}\right)\int_0^t\|\nabla u_n(s)\|_0^2\,ds
\\
&\leq \|\nabla u_0\|_0^2+\frac{c'_W}{c_W}\int_0^t\|\nabla u_n(s)\|_0^2\,ds\\
&\quad-\frac{2 }{\sqrt{c_W}}\sum_{k\in I_n^2}\frac{1}{|k|^{\beta-1}}\int_0^t\left[\left\langle(\Cos_k\cdot \nabla)u_n(s),\Delta u_n(s)\right\rangle_0\,dB_k^1(s)\right.\\
&\hspace{7cm}\left.+\left\langle (\Sin_k\cdot \nabla)u_n(s),\Delta u_n(s)\right\rangle_0\,dB_k^2(s)\right],
\end{align*}
where
$$
c'_W=\sum_{k\in \mathbb{Z}^2\setminus \{(0,0)\}}\frac{(k^1)^2}{|k|^{2\beta-2}}.
$$
Thus, applying expectations to the expression of $\|\nabla u_n(t)\|_0^2$ and next Gronwall inequality, we obtain
\begin{align*}
\mathbb{E}\|\nabla u_n(t)\|_0^2\leq \|\nabla u_0\|_0^2 \, e^{\frac{c'_W}{c_W}t}.
\end{align*}
It ends the proof of this Lemma.
\end{proof}

\smallskip

Observe that using Lemma \ref{apriori-estimates:periodic} we can adapt the arguments of Section \ref{section:existence} in order to give an existence result for the solution of Equation \eqref{Euler:ito:periodic:eq} as a solution of a martingale problem. The statement of the result that can be proven is the following:
\begin{thm}
\label{Torus-result}
Let $u_0\in V$. Then there exist a probability space $(\Omega,\mathcal{F},\mathbb{P})$ with a right-continuous filtration $\mathbb{F}=\{\mathcal{F}_t\}$ of $\sigma$-algebras, a Wiener process $W$ on $[0,2\pi]^2$ defined by \eqref{brownian torus}, and an $[L^2([0,2\pi]^2)]^2$-valued weakly continuous $\mathbb{F}$-adapted process $u(t)$ such that
$$
\sup_{0\leq t \leq T}\hbox {ess sup}_{\Omega}\|u(t)\|_0^2\leq \|u_0\|_0^2=:\mathcal{K}<+\infty,
$$

$$
\sup_{0\leq t\leq T}\mathbb{E}\| u(t)\|_1^2\leq \|u_0\|_1^2 \, e^{CT}=:\mathcal{K}'<\infty,
$$
where $C$ is a positive constant, and (\ref{Euler:ito:periodic:eq}) holds. In addition, $u(t)$ is strongly continuous in $t$.
\end{thm}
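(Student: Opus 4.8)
The plan is to mimic the argument that established Theorem \ref{Main-result}, replacing the role of Lemma \ref{apriori-estimates} by Lemma \ref{apriori-estimates:periodic} throughout, and adjusting the functional-analytic setting to the periodic boundary conditions. First I would note that the basic spaces $H$, $V$ for the torus are already defined above, and that the system $\{\Cos_k,\Sin_k:k\in\mathbb Z^2\}$ plays the role of the orthonormal basis $\{e_j\}$; it is orthogonal in $V$ and consists of eigenvectors of $-\Delta$ (equivalently of $\mathcal A$), so all the algebraic cancellations $\langle(u\cdot\nabla)v,v\rangle_0=0$ and $\langle\nabla[(u\cdot\nabla)u],\nabla u\rangle_0=0$ used before remain valid by periodicity (the boundary terms over $\Gamma$ that appeared in Lemma \ref{apriori-estimates} now vanish because opposite sides of the square cancel rather than contributing a surface integral). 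Thus the Galerkin approximations $u_n$ of \eqref{NS:sde:per} are globally defined in $\mathcal C([0,T];H_n)$, and Lemma \ref{apriori-estimates:periodic} furnishes the two a priori bounds, the second one now carrying the harmless factor $e^{Ct}$ with $C=c'_W/c_W$.

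Next I would reproduce the compactness machinery of Section \ref{section:existence} verbatim: fix $\mathcal U=[H^\kappa(\Theta)]^2$ with $\kappa>2$, form $\mathcal X=\cap_{i=1}^4\mathcal X_i$ with the topology $\mathcal T$, and observe that Lemmas \ref{compact-embed}, \ref{compact-rel} and the operator estimates of Lemma \ref{operator-estimates} hold in the periodic setting (their proofs in \cite{MiRo05} are local and insensitive to boundary conditions, and $\mathcal A$, $\mathcal B$ satisfy the same bounds). The extra ingredient to check is the noise term: since the Wiener process $W$ of \eqref{brownian torus} has covariance $Q$ of trace class on $[H^\alpha(\Theta)]^2$ for $0<\alpha<\beta-2$, and $\beta>3$, the diffusion coefficients $u\mapsto\sum_l\langle\partial_l u,v\rangle$ obey an estimate of the same type $\sum_l\|\partial_l u\|_{-1}\le C\|u\|_0$ uniformly, so the tightness criterion of Corollary \ref{cor:compact-rel} applies and $\{\mathbb P^n\}$ is relatively weakly compact on $(\mathcal X,\mathcal T)$. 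Passing to a weakly convergent subsequence $\mathbb P^n\to\mathbb P$, I would then identify $\mathbb P$ as a solution of the corresponding martingale problem, the characteristic $\varphi^v$ being now modified to incorporate the quadratic variation of $\sum_l\partial_l u\circ dW^l$, i.e. $-\tfrac12\sum_{k}q_k(\langle(\Cos_k\cdot\nabla)u,v\rangle^2+\langle(\Sin_k\cdot\nabla)u,v\rangle^2)/c_W$; the equicontinuity-in-$t$ and the convergence $\sup_{s\le T}|L^{n,v}_t(\omega_n)-L^v_t(\omega)|\to0$ on compacts go through exactly as in Theorem \ref{aux-Main-result}.

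Finally, from the martingale problem I would reconstruct the $[L^2(\Theta)]^2$-valued continuous martingale $M(t)$ via $M(t)=\sum_k M^{\Cos_k}(t)\,\Cos_k+M^{\Sin_k}(t)\,\Sin_k$, using the bound $\mathbb E\{\sum_l\int_0^t\langle\partial_l u(s),v\rangle_0^2\,ds\}<\infty$ that follows from \eqref{est2:periodic}, and then invoke Lemma 3.2 in \cite{MiRo99} (martingale representation) to produce, possibly on an extension of the probability space, the sequence of Brownian motions $B_k$ realizing $M(t)=\tfrac1{\sqrt{c_W}}\sum_k q_k^{1/2}\int_0^t[(\Cos_k\cdot\nabla)u\,dB_k^1+(\Sin_k\cdot\nabla)u\,dB_k^2]$, i.e. exactly the stochastic integral against $W$ in \eqref{Euler:ito:periodic:eq}. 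Weak continuity of $u$ in $[L^2_w(\Theta)]^2$ comes from membership in $\mathcal X_2$; the two stated bounds are just \eqref{est1:periodic} and \eqref{est2:periodic} passed to the limit under weak lower semicontinuity of the norms; and strong continuity in $t$ follows as in \cite{MiRo05} from the equation itself together with $u\in L^2(0,T;V)$ and $du\in L^2(0,T;V')+(\text{martingale})$. The main obstacle, as in the Dirichlet case, is verifying that the limit point genuinely solves the martingale problem — i.e. controlling the nonlinear term $\mathcal B u_n$ along the weakly convergent sequence so that $\int_0^t e^{i\langle u_n,v\rangle}\langle\mathcal B u_n,v\rangle\,ds$ converges to the corresponding expression for $u$; this is where the joint use of the strong $L^2(0,T;L^2)$ convergence from $\mathcal X_4$, the weak $L^2(0,T;H^1)$ convergence from $\mathcal X_3$, and the estimate $\|\mathcal B u\|_{-\kappa}\le C\|u\|_0^2$ of Lemma \ref{operator-estimates} is essential, exactly as in the proof of Theorem 2.10 in \cite{MiRo05}.
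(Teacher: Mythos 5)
Your proposal is correct and follows exactly the route the paper intends: the paper itself offers no detailed proof of Theorem~\ref{Torus-result}, only the remark that Lemma~\ref{apriori-estimates:periodic} allows one to adapt the arguments of Section~\ref{section:existence}, and your write-up fills in precisely that adaptation (modified martingale characteristic for the trace-class noise, same compactness and operator estimates, same martingale representation step). No gaps; if anything, you supply more detail than the authors do.
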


Observe that if in the definition of the Brownian motion $W(t)$ we only sum on a finite number of indeces this Theorem still holds.

When the Brownian motion is space-independent, namely when
\begin{equation*}
W(t)=\Cos_{(0,0)} B^1_{(0,0)} (t) + \Sin_{(0,0)} B^2_{(0,0)} (t)=(B^1 (t), B^2 (t))
\end{equation*}
 the equation becomes analogous  to the one  in sections 2-4, now with periodic boundary conditions
The stochastic transport equation for this case has been studied recently in \cite{Y13}.

\smallskip

\section{Geometric formulation of the equation}

V. I. Arnold (cf. \cite{A66}) showed that the deterministic non-viscous incompressible Euler equation, namely

$${d\over dt}u( t,\theta )=-(u(t,\theta )\cdot\nabla )u(t, \theta )-\nabla p (t,\theta)$$

$${\rm div}\, u (t, \theta )=0$$
corresponds to the equation of the geodesic flow defined on the group $G (M )$ of measure-preserving  diffeomorphisms over a compact oriented Riemannian manifold $M$ with respect to the $L^2$ metric for the volume measure. Such a group is infinite-dimensional but, nevertheless, can be endowed with  some Riemannian structure, as it was
proved and developed by Ebin and Marsden (\cite{EM70}) following Arnold's work. The metric is
right-invariant and we have practically a Lie algebra (except for some regularity conditions).
In particular Euler equation can be written as a geodesic equation,

$${d\over dt}u (t)=-\sum_{l,j}\Gamma_{l,j} u^l (t) u^j (t)$$
where $\Gamma$ denote the  Christoffel symbols of the associated Levi-Civita connection,
$u^*$ the components of the vector $u$ in the corresponding tangent space to the identity
(or Lie algebra, which is identified with the space of vector fields with vanishing divergence).
The equation here  should be interpreted in the sense of distributions for $L^2$, which
explains that the pressure term does not appear.

\smallskip

Actually the formulation of Euler equation as a geodesic flow is a particular case
of the so-called Euler-Poincar\'e reduction in Geometrical Mechanics, that has been generalized
to the stochastic framework in \cite{ACC13}.

\smallskip

Consider, as before, an orthonormal basis $e_j$ for $H$, and define the constants of structure $c_{k,l}^m$ by

$$[e_k ,e_l  ]=\sum_m c_{k,l}^m e_m.$$
Then the Levi-Civita Christoffel symbols with respect to this basis are obtained by

$$\Gamma_{k ,l}^m ={1\over 2} (c_{k,l}^m -c_{l,m}^k +c_{m,k}^l )$$

When $\Theta =\mathbb T^2$ is the two-dimensional torus we refer to \cite{CFM07} for an explicit computation of the
 Lie brackets of the  vector fields $\Cos_k , \Sin_k $ and for the expression of the corresponding Christoffel symbols.

\smallskip

In this context the  stochastic Euler equation \eqref{Euler:str:eq}, without the pressure term, reads,

$$du (t)=-\sum_{l,j}\Gamma_{l,j} u^l (t) u^j (t)-  \Gamma_{l ,j} u^j (t) \circ dB^l (t).$$

The linear part describes the stochastic parallel transport over Brownian paths. Our equation is therefore a transport type equation where both an initial velocity and an initial noise are transported along the underlying stochastic Lagrangian flow (with drift given by $u$).

Using a space-independent Brownian motion corresponds to let the component $l$ take only the value $(0,0)$.

\bigskip

\smallskip
{\noindent \bf Acknowledgments:}
We thank the referees for a cautious reading of the paper.

The second author wishes to acknowledge the support of the FCT portuguese project Pest-OE/MAT/UI0208/2011 by means of a post-doctoral position for one year.

\smallskip

\end{document}